\title[Pentagon and Confluence]
{The pentagon equation and\\ the confluence relations}
\author{Hidekazu Furusho}
\address{Graduate School of Mathematics, Nagoya University, Chikusa-ku, Furo-cho, Nagoya, 464-8602,  Japan}
\email{furusho@math.nagoya-u.ac.jp}
\keywords{the Grothendieck-Teichm\"{u}ller group, associators, multiple zeta values,
multiple polylogarithms, the confluence relations}
\subjclass[2010]{11M32}
\date{February 13, 2022}
\newtheorem{thm}{Theorem}
\newtheorem{lem}[thm]{Lemma}
\newtheorem{cor}[thm]{Corollary}
\newtheorem{prop}[thm]{Proposition}  
\theoremstyle{remark}
\theoremstyle{definition}
\newtheorem{defn}[thm]{Definition}
\newtheorem{rem}[thm]{Remark}
\numberwithin{equation}{section}
\numberwithin{figure}{section}
\newcommand{\Q}{\mathbb{Q}}
\newcommand{\C}{\mathbb{C}}
\newcommand{\R}{\mathbb{R}}
\newcommand{\Z}{\mathbb{Z}}
\newcommand{\N}{\mathbb{N}}
\newcommand{\A}{\mathcal{A}}
\newcommand{\B}{\mathcal{B}}
\newcommand{\M}{\mathcal{M}}
\newcommand{\X}{\mathcal{X}}
\newcommand{\Y}{\mathcal{Y}}
\newcommand{\I}{\mathcal{I}}
\newcommand{\Const}{\mathrm{Const}}
\newcommand{\pr}{\mathrm{pr}}
\newcommand{\id}{\mathrm{id}}
\newcommand{\reg}{\mathrm{reg}}
\newcommand{\dec}{\mathrm{dec}}
\newcommand{\ST}{\mathrm{ST}}
\newcommand{\CF}{\mathrm{CF}}
\newcommand{\Li}{\mathrm{Li}}
\newcommand{\sha}{\shuffle}
\begin{document}
\bibliographystyle{amsalpha+}
\maketitle

\begin{abstract}
We show an equivalence of 
Drinfeld's pentagon equation and Hirose-Sato's confluence relations.
As a corollary, we obtain a \lq pentagon-free' presentation of 
the Grothendieck-Teichm\"{u}ller group $GRT_1$
and associators.
\end{abstract}

\tableofcontents
\section{Introduction}\label{introduction}
This paper discusses an equivalence of two types of relations (Theorem \ref{thm:main}):
\begin{itemize}
\item
Drinfeld's pentagon equation (\cite{Dr}) which is the main defining equation of associators and the  Grothendieck-Teichm\"{u}ller group.
\item
Hirose-Sato's confluence relations  (\cite{HS}) which are
(conjectured to exhaust all the)
relations among
multiple zeta values (cf. \S\ref{sec:confluence relation}).
\end{itemize}
As a corollary, a new formulation of associators and the Grothendieck-Teichm\"{u}ller group is obtained (Corollary \ref{cor:main}).

Let $\frak f_2$ be the free Lie algebra over $\Q$ with two variables $f_0$ and $f_1$
and $U\frak f_2:=\Q\langle f_0,f_1\rangle$ be its universal enveloping algebra.
We denote $\widehat{\frak f_2}$ and $\widehat{U\frak f_2}:=\Q\langle\langle f_0,f_1\rangle\rangle$ to be their completions by
degrees. 
An {\it associator} (\cite{Dr, F10}) is
a series $\varphi=\varphi(f_0,f_1)$ in
$\widehat{U\mathfrak f_2}$
with non-zero quadratic terms
which satisfies the following:
\begin{itemize}
\item the commutator group-like condition: $\varphi\in \exp [\hat{\mathfrak f}_2,\hat{\mathfrak f}_2].$
\item the pentagon equation:
$\varphi_{345}\varphi_{512}\varphi_{234}\varphi_{451}\varphi_{123}=1$
in  $\widehat{U\mathfrak P_5}$.
\end{itemize}
Here $\exp [\hat{\mathfrak f}_2,\hat{\mathfrak f}_2]$ is the image of
the topological commutator $[\hat{\mathfrak f}_2,\hat{\mathfrak f}_2]$
of $\hat{\mathfrak f}_2$
under the exponential map and
$\frak P_5$ is the Lie algebra generated by $t_{ij}$ ($i,j\in\Z/5$)
with the relations
\begin{itemize}
\item $t_{ij}=t_{ji}$, \quad $t_{ii}=0$,
\item $\sum\nolimits_{j\in\Z/5}t_{ij}=0\quad  (\forall i\in \Z/5)$,
\item $[t_{ij},t_{kl}]=0 \quad \text{for}\quad \{i,j\}\cap\{k,l\}=\emptyset$.
\end{itemize}
For  $i,j,k\in\Z/5$,  $\varphi_{ijk}$ means the image of $\varphi$ under the 
embedding $\widehat{U\mathfrak f_2}\to\widehat{U\mathfrak P_5}$
sending $f_0\mapsto t_{ij}$ and $f_1\mapsto t_{jk}$.

\begin{thm}\label{thm:main}
Let $\varphi$ be a commutator group-like series in $\widehat{U\frak f_2}$,
i.e. $\varphi\in \exp [\hat{\mathfrak f}_2,\hat{\mathfrak f}_2].$
Then it satisfies the pentagon equation if and only if it satisfies the confluence relations
(cf. \S\ref{sec:confluence relation}). 
\end{thm}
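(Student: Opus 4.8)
The plan is to interpret both relations as compatibility conditions for the regularized holonomy of the universal Knizhnik--Zamolodchikov connection, and then to match them through the boundary geometry of the moduli space $M_{0,5}$ whose Lie algebra of monodromy is $\widehat{U\mathfrak P_5}$. First I would realize $\varphi$ as the regularized holonomy along a fixed path of the connection $dG=\bigl(\sum_{i<j}t_{ij}\,\omega_{ij}\bigr)G$ on $M_{0,5}$, where $\omega_{ij}=d\log(x_i-x_j)$. Under this identification the five factors $\varphi_{ijk}$ are the local transition matrices at the five codimension-one boundary divisors obtained by colliding two cyclically adjacent points, and the pentagon equation $\varphi_{345}\varphi_{512}\varphi_{234}\varphi_{451}\varphi_{123}=1$ becomes the statement that transporting around the pentagonal loop formed by these divisors returns the identity. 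The commutator group-like hypothesis $\varphi\in\exp[\hat{\mathfrak f}_2,\hat{\mathfrak f}_2]$ guarantees that $\varphi$ has trivial linear part, so that these unipotent transition matrices are genuinely well defined and the loop product converges in $\widehat{U\mathfrak P_5}$.

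Next I would rewrite the confluence relations of \S\ref{sec:confluence relation} in the same language. The confluence operation merges two adjacent singular points of the one-variable KZ equation, and the associated regularized limit is governed by precisely one such transition matrix $\varphi_{ijk}$. Thus each confluence relation becomes the assertion that two ways of taking such a regularized limit --- along two divisors meeting in a common codimension-two stratum --- agree. For the direction \emph{pentagon $\Rightarrow$ confluence}, I would apply the degeneration (face) homomorphisms from $\widehat{U\mathfrak P_5}$ to the boundary algebras to the pentagon equation and read off each confluence relation as the induced identity at the corresponding stratum; group-likeness ensures these factor cleanly through the commutator part.

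For the converse \emph{confluence $\Rightarrow$ pentagon}, I would argue that the confluence relations determine the regularized holonomy along every boundary divisor of $M_{0,5}$, and that --- because the pentagonal boundary cycle bounds --- these boundary data already force the global loop around the pentagon to be trivial. Concretely this amounts to reconstructing the single quadratic pentagon identity from the family of confluence identities by an induction on the weight, matching the two systems coefficient by coefficient in $\widehat{U\mathfrak P_5}$.

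I expect the converse to be the main obstacle: the confluence relations look, a priori, strictly weaker and more local than the pentagon, so the crux is to prove that they are in fact \emph{complete}, i.e.\ that no information about the $M_{0,5}$ monodromy is lost in passing to the boundary. The key technical point will be to verify that the combinatorics of the five adjacent-collision divisors exactly exhausts the confluence data, and that the weight-graded reconstruction closes up with no residual obstruction beyond the commutator group-like condition already assumed.
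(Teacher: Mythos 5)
Your geometric picture (transition matrices at the boundary divisors of $\overline{\M_{0,5}}$, the pentagon as triviality of the boundary loop) is indeed the picture behind the paper, but both of your implications have genuine gaps. For pentagon $\Rightarrow$ confluence, the step ``apply the degeneration homomorphisms to the pentagon and read off each confluence relation at the corresponding stratum'' does not work as stated, because the confluence relations are not indexed by codimension-two strata of $\overline{\M_{0,5}}$: the set $\I_\CF=\lambda(\I_\ST)$ is defined through the purely algebraic map $\lambda$, built from the shuffle regularizations $\reg_{z,1}$ and $\reg_z$, the anti-automorphism $\tau_z$, the map $\Const$, and the substitution $z\to 1$, and nothing in the pentagon equation directly produces $\lambda$. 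The actual content of this direction is the pairing identity $\langle\lambda(l)\mid\varphi\rangle=\langle j_2(l)\mid\varphi_{243}^{-1}\varphi_{215}\varphi_{534}\rangle$ for $l\in\A_z^0$ (Proposition \ref{prop: key formula}), whose proof needs the Oi--Ueno bar-construction decompositions, the involution $\tau=(1\,4)(3\,5)\in\frak S_5$ together with Lemma \ref{lem:new involution} to absorb $\tau_z$, and the $2$-cycle relation $\varphi(f_0,f_1)\varphi(f_1,f_0)=1$, which under the commutator group-like hypothesis follows from the pentagon only by the nontrivial result of \cite{F10}. Your sketch mentions none of these; in particular the $2$-cycle relation is used essentially and is not delivered by ``group-likeness'' alone.

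For the converse you candidly flag completeness as the main obstacle, but the weight-graded reconstruction you propose is precisely the missing argument, and it is not how completeness is actually achieved. The paper instead (a) derives the $2$-cycle relation from the confluence relations via the duality relation of \cite{HS} (Lemma \ref{lem:2cycle}) --- without this you cannot reduce the five-term product $P_{15342}=\varphi_{153}\varphi_{342}\varphi_{215}\varphi_{534}\varphi_{421}$ to an element of $\exp\hat{\frak f}_3(t_{21},t_{23},t_{24})$, nor kill its images under the projections $\pr_2,\pr_3,\pr_4$; (b) manufactures, for \emph{every} $l\in\A_z^0$, a standard relation $l-\tilde l\in\I_\ST$ (Lemma \ref{lem:construction}, resting on the explicit presentation of $\tilde{\mathcal I}_{\mathrm{ST}}$ in Lemma \ref{basis presentation}), so the hypothesis yields $\langle j_2(l)\mid P_{15342}\rangle=0$ for all positive-degree $l$, not merely for some boundary-indexed family; and (c) concludes $P_{15342}=1$ by nondegeneracy of the pairing, since $P_{15342}$ is group-like without linear terms, then obtains the pentagon by acting with $(2\,5)\in\frak S_5$. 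Your appeal to ``the pentagonal boundary cycle bounds'' is a homological heuristic with no mechanism replacing (a) or (b): the confluence data live in $\A^0$, in two letters, and the real difficulty is propagating them back up to $\widehat{U\frak P_5}$; without the key pairing formula and the construction of standard relations, your coefficientwise induction has nothing to close on.
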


As a corollary, we obtain
a new formulation of the set $M(\Q)$ of associators
and also that of the graded Grothendieck-Teichm\"{u}ller  group $GRT_1(\Q)$,
the set  of \lq associators without quadratic terms' (\cite{Dr}):
\begin{cor}\label{cor:main}
There are equalities:
\begin{align*}
M(\Q) &=\{\varphi\in \exp [\hat{\mathfrak f}_2,\hat{\mathfrak f}_2] \mid \
<e_0e_1|\varphi>\neq 0
\text{ and }<l|\varphi>=0 \text{ for }  l\in\I_\CF\}, \\
GRT_1(\Q) &=\{\varphi\in \exp [\hat{\mathfrak f}_2,\hat{\mathfrak f}_2] \mid \
<e_0e_1|\varphi>= 0
\text{ and }<l|\varphi>=0 \text{ for }  l\in\I_\CF\}. 
\end{align*}
\end{cor}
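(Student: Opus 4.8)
The plan is to derive Corollary~\ref{cor:main} from Theorem~\ref{thm:main} by reinterpreting the confluence relations as a pairing condition against an explicit space of linear forms, and then bookkeeping the quadratic term. First I would recall Drinfeld's definitions: $M(\Q)$ is the set of $\varphi\in\exp[\hat{\mathfrak f}_2,\hat{\mathfrak f}_2]$ with nonzero quadratic part satisfying the pentagon (and, in Drinfeld's original setup, the hexagons, but for the graded $GRT_1$ and the associator set as formulated here the pentagon together with group-likeness is what Theorem~\ref{thm:main} governs), while $GRT_1(\Q)$ consists of the analogous series whose quadratic term vanishes. By Theorem~\ref{thm:main}, for a commutator group-like $\varphi$ the pentagon equation is equivalent to the confluence relations, so membership in either set is controlled entirely by (i)~the vanishing $\langle l\mid\varphi\rangle=0$ for all $l\in\I_\CF$ encoding the confluence relations, and (ii)~a single scalar condition on the quadratic coefficient.

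Next I would make the identification between the confluence relations and the pairing $\langle l\mid\varphi\rangle=0$ for $l\in\I_\CF$ fully explicit. This requires tracing through the definition in \S\ref{sec:confluence relation}: the confluence relations, as linear relations among the coefficients of $\varphi$ (equivalently among multiple zeta value-type data), are precisely the statements that $\varphi$ pairs to zero against a distinguished $\Q$-linear subspace $\I_\CF$ of the dual, spanned by words $l$ in $e_0,e_1$. So "$\varphi$ satisfies the confluence relations" and "$\langle l\mid\varphi\rangle=0$ for all $l\in\I_\CF$" are two names for the same condition; the corollary's set-builder notation is just Theorem~\ref{thm:main} rewritten in coordinates. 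The one remaining degree of freedom is the quadratic term, and I would pin this down via the pairing $\langle e_0e_1\mid\varphi\rangle$: since the quadratic part of any $\varphi\in\exp[\hat{\mathfrak f}_2,\hat{\mathfrak f}_2]$ is a multiple of $[f_0,f_1]$, its coefficient is detected (up to a harmless constant) by $\langle e_0e_1\mid\varphi\rangle$, because $[f_0,f_1]=f_0f_1-f_1f_0$ pairs to $+1$ against the word $e_0e_1$ and to $-1$ against $e_1e_0$. Hence $\langle e_0e_1\mid\varphi\rangle\neq0$ exactly captures "nonzero quadratic term" (the associator condition), and $\langle e_0e_1\mid\varphi\rangle=0$ captures "no quadratic term" ($GRT_1$).

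Assembling these, the two displayed equalities follow by combining: $\varphi\in M(\Q)$ iff $\varphi$ is commutator group-like, satisfies the pentagon, and has nonzero quadratic part iff (by Theorem~\ref{thm:main} and the two pairing translations) $\varphi\in\exp[\hat{\mathfrak f}_2,\hat{\mathfrak f}_2]$ with $\langle e_0e_1\mid\varphi\rangle\neq0$ and $\langle l\mid\varphi\rangle=0$ for all $l\in\I_\CF$; the $GRT_1(\Q)$ case is identical with the sign of the scalar condition flipped to $\langle e_0e_1\mid\varphi\rangle=0$.

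I expect the main obstacle to be the careful verification that the pairing $\langle e_0e_1\mid\varphi\rangle$ isolates the quadratic coefficient correctly for \emph{commutator group-like} series, and that $\I_\CF$ as defined in \S\ref{sec:confluence relation} does not already contain (or interfere with) the word $e_0e_1$. In principle $\I_\CF$ could impose a relation touching the degree-two part, which would make the "nonzero quadratic term" condition redundant or contradictory; I would need to confirm that the confluence relations in $\I_\CF$ live in degrees that leave the coefficient of $e_0e_1$ free, so that the scalar condition is genuinely independent and the set-theoretic decomposition $M(\Q)\sqcup GRT_1(\Q)$ along $\langle e_0e_1\mid\varphi\rangle\neq0$ versus $=0$ is clean. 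Once that independence is checked, the rest is a direct translation of Theorem~\ref{thm:main} into the dual pairing language, with no further heavy computation.
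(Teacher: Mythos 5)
Your proposal is correct and follows essentially the same route the paper leaves implicit: since the paper already adopts the pentagon-only definition of associator (hexagons being a consequence by \cite{F10}), the corollary is just Theorem \ref{thm:main} combined with the observation that for $\varphi=\exp(c[f_0,f_1]+\text{higher})\in\exp[\hat{\mathfrak f}_2,\hat{\mathfrak f}_2]$ one has $<e_0e_1|\varphi>=c$, so the pairing detects exactly the quadratic coefficient. Your closing worry about independence of the scalar condition is not actually needed for the set equalities (each is merely a conjunction of equivalent conditions), and it is in any case settled immediately: $L(\I_\CF)=0$ by Theorem \ref{thm:HS confluence-->double shuffle}(i) while $L(e_0e_1)=-\zeta(2)\neq 0$, so $e_0e_1\notin\I_\CF$ and the degree-two coefficient is left free.
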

For the set $\I_\CF$ of confluence relations, consult \S\ref{sec:confluence relation}
and for the pairing $<\cdot | \cdot>$, see \eqref{eqn:pairing}.
It is remarkable that
the right hand side of the second equation turns to be a group under the operation
$$\varphi_1\circ\varphi_2=\varphi_1(\varphi_2(f_0,f_1) f_0\varphi_2(f_0,f_1)^{-1},f_1)\varphi_2(f_0,f_1)$$
because $GRT_1(\Q)$ forms a group (\cite{Dr}).
Its associated Lie algebra $\frak{grt}_1(\Q)$ is described as
$$
\frak{grt}_1(\Q)=\{\psi\in [\hat{\mathfrak f}_2,\hat{\mathfrak f}_2] \mid \
<e_0e_1|\psi>= 0
\text{ and }<l|\psi>=0 \text{ for }  l\in\I_\CF\}.
$$

It might be worthy to pursue a pentagon-free presentation of
the filtered version $GT_1(\Q)$ and the profinite version $\widehat{GT}_1$
of the Grothendieck-Teichm\"{u}ller group which were both introduced in \cite{Dr}.

In \cite{HS} they showed that the confluence relations imply the regularized (generalized) double shuffle relations (cf. Theorem \ref{thm:HS confluence-->double shuffle}).
By combining it with Theorem \ref{thm:main},
we recover the result in \cite{F11} that the associator relation implies 
the generalized double shuffle relations.
The author expects that our new formulation would give us a further understanding on the implication. 
 
Here is the plan of the proof of Theorem \ref{thm:main}:
Under the commutator group-like condition, we  show that the pentagon equation
implies the confluence relations (Theorem \ref{thm:main 1})
and vice versa (Theorem \ref{thm:main 2}).

\section{Confluence relations}\label{sec:confluence relation}
We recall the definition of the confluence relations 
by employing their original symbols in \cite{HS}.

Let $z\neq 0,1 \in \C$. 
Put $\A_z=\Q\langle e_0,e_z,e_1\rangle$.
We consider the following sequence of linear subspaces
$\A_z\supset \A_z^1\supset \A_z^0\supset\A_z^{-1} \supset \A_z^{-2}$,
where
\begin{align*}
&\A_z^1 =\Q\oplus\A_z e_1\oplus \A_z e_z, \\
&\A_z^0 =\Q\oplus \Q e_z \oplus_{a\in\{0,z\}, b\in\{1,z\}} e_a\A_z e_b, \\
&\A_z^{-1} =\Q\langle e_z\rangle\cdot \A_z^{-2},\\
&\A_z^{-2} =\Q\oplus e_0\A_ze_1 \oplus e_0\A_ze_z.
\end{align*}
(For our purpose, we reverse the orders in their definitions of the subspaces in \cite{HS},
that is, we read them backwards.)
For $\A=\Q\langle e_0,e_1\rangle$,
set $\A^i:=\A\cap \A^i_z$. We have  $\A^{-2}=\A^{-1}=\A^0$.
All the spaces form algebras under the shuffle product $\sha:\A_z\times \A_z \to \A_z$ such that
$e_au\sha e_bv=e_a(u\sha e_bv)+e_b(e_au\sha v)$ ($u,v\in\A_z$).
By abuse of notation, we also consider their copies
$\A_w=\Q\langle e_0,e_w,e_1\rangle
\supset \A_w^1\supset \A_w^0\supset\A_w^{-1} \supset \A_w^{-2}$
for $w\neq 0,1 \in \C$.

We consider the iterated integral given by
\begin{equation}\label{eqn:L}
L(e_{a_n}\cdots e_{a_1}):=\int_{0<t_1<\cdots<t_n<1}
\frac{dt_n}{t_n-a_n}\wedge\cdots\wedge\frac{dt_1}{t_1-a_1}
\end{equation}
and $L(1)=1$.
It yields a linear map 
$L:\A^0_z\to \mathrm{Hol}_z(\C\setminus [0,1])$
to the space of holomorphic functions on
$\C\setminus [0,1]$.

The image is a holomorphic function on $z$
(Its relation with multiple polylogarithm will be discussed in Remark \ref{rem:rho and MPL}). 
Particularly 
the image of $\A^0$ is given by a multiple zeta value (MZV in short):
$$L(e_0^{k_1-1}e_1\cdots e_0^{k_m-1}e_1)=(-1)^m\zeta(k_1,\dots,k_m)$$
with
$\zeta(k_1,\dots,k_m):=\sum_{n_1>\cdots>n_m>0}
\frac{1}{n_1^{k_1}\cdots n_m^{k_m}}$ ($k_1>1$).

The shuffle algebra homomorphism 
$\Const: \A_z\to \A$ is defined by 
$\Const(e_{a_1}\cdots e_{a_n})=e_{a_1}\cdots e_{a_n}$ if $a_i\neq z$ for all $i$,
and 0 otherwise.
We have $\lim_{z\to \infty}L(l)=L(\Const(l))$ for $l\in\A^0_z$.

The linear operator $\partial_{z,\alpha}: \A_z\to \A_z$  ($\alpha=0,1$)
is defined by 
\begin{equation}\label{eqn:partial z alpha}
\partial_{z,\alpha}(e_{a_n}\cdots e_{a_1})
=\sum\nolimits_{i=1}^n\left(\delta_{\{a_i,a_{i+1}\},\{z,\alpha\}}-\delta_{\{a_{i-1},a_{i}\},\{z,\alpha\}}\right)
e_{a_n}\cdots \Check{e}_{a_i}\cdots e_{a_1}
\end{equation}
and $\partial_{z,\alpha}(1)=0$
with $a_0=0$ and $a_{n+1}=1$.
Here $\delta_{\{\alpha_1,\alpha_2\},\{\beta_1,\beta_2\}}$ is the Kronecker delta function, i.e. 1 if $\{\alpha_1,\alpha_2\}=\{\beta_1,\beta_2\}$ as sets and $0$ otherwise.
It is shown in \cite{HIST} that
$$\frac{d}{dz}L(l)=\frac{1}{z}L(\partial_{z,0}(l))+\frac{1}{z-1}L(\partial_{z,1}(l)).$$

The set of  standard relations  is defined to be the subspace of $\A^0_z$,
$$
{\I}_{\ST}:=\{l\in\A^0_z \mid 
\Const(\partial_{z,\alpha_1}\cdots\partial_{z,\alpha_r}l)=0
\text{ for } 
r\geqslant 0, \alpha_1,\dots,\alpha_r\in\{0,1\}
\},
$$
which actually forms an ideal of $(\A^0_z,\sha)$.
An element of ${\I}_{\mathrm{ST}}$ is called {\it a standard relation}.
They  showed that $L(\I_\ST)=0$.

They considered 
the algebra homomorphism
$N:(\A^0_z,\sha)\to (\A^{-1}_z,\sha)$
which is defined by the composition of the following algebra homomorphisms
with respect to $\sha$
$$
N:
\A^0_z
\overset{\reg_{z,1}}{\underset{\simeq}{\longrightarrow}}
\A_z^{-2}\otimes (\A^0_z\cap \Q\langle e_1,e_z\rangle)
\overset{\id\otimes{\tau_z}}{\longrightarrow}
\A_z^{-2}\otimes (\A^0_z\cap \Q\langle e_0,e_z\rangle)
{\twoheadrightarrow}
\A_z^{-1}.
$$
Here $\reg_{z,1}$  in the first map is caused by the isomorphism
$\A_z^{-2}\otimes (\A^0_z\cap \Q\langle e_1,e_z\rangle) \simeq \A_z^0;
u\otimes v\mapsto u\sha v$,
$\tau_z$ appearing in the second map is the anti-automorphism
${\tau_z}: (\A_z,\cdot)\to (\A_z,\cdot)$ such that 
$\begin{cases}
e_0 \mapsto & e_z-e_1,\\
e_1 \mapsto &e_z-e_0,\\
e_z \mapsto &e_z,
\end{cases}$
and the third map is the surjection simply induced by 
the shuffle algebra homomorphism $\A_z^{\otimes 2}\to \A_z;$
$u\otimes v\mapsto u\sha v$.
Then they introduced the algebra homomorphism
$\lambda:(\A^0_z,\sha)\to (\A^0,\sha)$
which is defined by the composition of the following algebra homomorphisms
$$
\lambda:
\A^0_z\overset{N}{\to}\A^{-1}_z
\overset{\reg_z}{\underset{\simeq}{\longrightarrow}}
\Q\langle e_z\rangle\otimes \A_z^{-2}
\overset{\mathrm{const}\otimes \id}{\longrightarrow}
\A_z^{-2}
\overset{|_{z\to 1}}{\longrightarrow}
\A^0.
$$
Here $\reg_z$ is caused by the isomorphism
$\Q\langle e_z \rangle\otimes \A_z^{-2} \simeq \A_z^{-1};
u\otimes v\mapsto u\sha v$,
$\mathrm{const}=\Const|_{\Q\langle e_z\rangle}$ and
$|_{z\to 1}:\A_z^{-2}\to \A^0; l\mapsto l_{z\to 1}$
is the algebra homomorphism
sending 
$e_z\mapsto e_1$ and $e_i\mapsto e_i$ for $i=0,1$.

The set of confluence relations is defined to be the image of $\I_\ST$
in $\A^0$
under the map $\lambda$,
$$\I_\CF:=\lambda(\I_\ST).$$
An element of $\I_\CF$ is called a {\it confluence relation}.

\begin{thm}[\cite{HS}]\label{thm:HS confluence-->double shuffle}
(i). We have $L(\I_\CF)=0$, that is, the confluence relations give linear relations among MZV's. \\
(ii). The confluence relations imply the regularized (generalized) double shuffle relations 
and also the duality relation, namely we have
$\I_\CF\supset \I_{\mathrm{RDS}}$ and $\I_\CF\supset \I_{\Delta}$.
\end{thm}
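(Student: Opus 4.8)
The plan is to read the combinatorial map $\lambda$ analytically, as the extraction of the regularized value of the multivalued function $z\mapsto L(l)(z)$ in the confluence $z\to 1$, and then to invoke the vanishing $L(\I_\ST)=0$ recalled above. For part (i) the identity I would establish is
\begin{equation*}
L(\lambda(l))=\reg\text{-}\lim_{z\to 1}L(l)(z)\qquad(l\in\A^0_z),
\end{equation*}
where the right-hand side means the constant term of the expansion of $L(l)(z)$ near $z=1$ as a polynomial in $\log(1-z)$ with coefficients holomorphic at $z=1$. The differential equation $\frac{d}{dz}L(l)=\frac1z L(\partial_{z,0}l)+\frac1{z-1}L(\partial_{z,1}l)$ shows that near $z=1$ the singularities of $L(l)$ are at worst logarithmic and are governed by $\partial_{z,1}$, so such an expansion exists. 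Granting the identity, part (i) is immediate: for $l\in\I_\ST$ the function $L(l)$ vanishes identically, hence so does its regularized limit, and therefore $L(\I_\CF)=L(\lambda(\I_\ST))=0$.

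To prove the identity I would match each factor of $\lambda$ to a step of a single regularization at $z=1$. The isomorphism $\reg_{z,1}$ writes a convergent word as a shuffle of an $\A^{-2}_z$-core and a tail in $\Q\langle e_1,e_z\rangle$, isolating precisely the letters whose forms $\tfrac{dt}{t-1}$ and $\tfrac{dt}{t-z}$ become singular at the right endpoint as $z\to 1$; the maps $\reg_z$ and $\mathrm{const}$ then strip off the resulting $\log(1-z)$-divergences (the leading powers of $e_z$), and $|_{z\to1}$ performs the specialization $e_z\mapsto e_1$ identifying the finite part with a combination of MZV's. The delicate ingredient, and where I expect the bulk of the work, is the anti-automorphism $\tau_z$. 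I expect it to be the algebraic incarnation of the blow-up $t=1-(1-z)s$ of the colliding singularities at the right endpoint: this substitution reverses the orientation of the integration path (which forces $\tau_z$ to be an anti-automorphism) and carries the divergent tail in $\Q\langle e_1,e_z\rangle$ to data in $\Q\langle e_0,e_z\rangle$ amenable to the subsequent regularization. The heart of (i) is a careful asymptotic analysis near $z=1$ verifying that inserting $\tau_z$ is exactly what turns the regularized limit into the MZV value $L(\lambda(l))$; this compatibility of the algebraic and analytic regularizations is the main obstacle.

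For part (ii) I would treat the two inclusions separately. The duality inclusion $\I_\CF\supset\I_\Delta$ should emerge from the same mechanism, since $\tau_z$ followed by $|_{z\to1}$ realizes the word-reversing anti-automorphism exchanging $e_0$ and $e_1$ that underlies the duality relations among MZV's; thus duality relations appear directly as images under $\lambda$ of suitable elements of $\I_\ST$. For the regularized double shuffle inclusion $\I_\CF\supset\I_{\mathrm{RDS}}$ the shuffle half is essentially automatic, because the entire construction lives in the shuffle algebra and $\I_\ST$ is a shuffle ideal of $(\A^0_z,\sha)$. The genuine content is the harmonic (stuffle) product, whose series origin is visible in the expansion of $L(l)(z)$ near $z=0$; here I would use the operators $\Const$ and $\partial_{z,\alpha}$ defining $\I_\ST$ to construct explicit standard relations whose images under $\lambda$ are the regularized stuffle relations.

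The hardest point of (ii) is capturing these stuffle relations, because the confluence formalism is tailored to the iterated-integral (shuffle) side while the stuffle product reflects the series expansion; bridging the two requires translating the harmonic product into statements about the $z$-dependence of $L$ and transporting them back through $\lambda$. I would expect to need a second regularization-matching lemma, parallel to the one in part (i), relating the shuffle- and harmonic-regularized MZV's to the single confluence regularization $\lambda$. Once these translations are in place, both inclusions follow by exhibiting the relevant elements of $\I_\ST$ and applying the identity of part (i).
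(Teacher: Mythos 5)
You should first note that the paper contains no proof of this statement at all: it is quoted from Hirose--Sato \cite{HS} (see also its use in Lemma \ref{lem:2cycle}), so your attempt can only be measured against the argument in \cite{HS}, not against anything in this paper. Your central identity for (i), namely that $L(\lambda(l))$ equals the regularized limit of $L(l)(z)$ as $z\to 1$ (constant term of the expansion in $\log(1-z)$), is indeed the analytic content of the main theorem of \cite{HS}, and granting it, (i) does follow from the recalled fact $L(\I_\ST)=0$. But your proposal only names this identity and then defers exactly the part that carries all the weight: the verification that each factor of $\lambda$ (in particular the insertion of $\tau_z$, and the interaction of $\reg_{z,1}$, $\reg_z$, $\mathrm{const}$ with the asymptotics at $z=1$) implements the analytic regularization. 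You say yourself this is ``the main obstacle''; it is also the technical bulk of \cite{HS}. So for (i) you have a correct reduction and a correct guess at the mechanism, but not a proof.

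For (ii) there is, beyond incompleteness, a concrete error: the claim that ``$\tau_z$ followed by $|_{z\to 1}$ realizes the word-reversing anti-automorphism exchanging $e_0$ and $e_1$'' is false as stated. Since $\tau_z$ sends $e_0\mapsto e_z-e_1$, $e_1\mapsto e_z-e_0$, $e_z\mapsto e_z$, composing with $e_z\mapsto e_1$ gives $e_0\mapsto 0$, $e_1\mapsto e_1-e_0$, $e_z\mapsto e_1$, which is degenerate and is not the duality involution $\tau_\infty(e_0)=-e_1$, $\tau_\infty(e_1)=-e_0$; note also that the geometric involution corresponds to $S\circ\tau_z$, antipode included (cf.\ Lemma \ref{lem:new involution}), so the signs and the antipode cannot be dropped. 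In \cite{HS} duality arises by comparing two evaluations of the same confluent integral (a word in $e_0,e_z$ specialized at $z\to 1$ directly versus through $\lambda$), not from the naive composite you describe. Likewise for $\I_{\mathrm{RDS}}$: you correctly locate the content on the stuffle side (products of $L$ of words in $e_0,e_z$, i.e.\ multiple polylogarithms at $z$, satisfy the harmonic product, so elements of the shape $u_{1\to z}\sha v_{1\to z}-(u\ast v)_{1\to z}$ should be standard relations), but you neither exhibit these elements nor verify the defining condition $\Const(\partial_{z,\alpha_1}\cdots\partial_{z,\alpha_r}(\cdot))=0$, nor handle the $\reg_\sha$-regularized cases with divergent $u\in\A^1$; the ``second regularization-matching lemma'' you postulate is precisely what must be proved. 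In sum: your architecture matches \cite{HS}, but both decisive steps are missing, and the proposed mechanism for duality is misidentified.
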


Here $\I_{\mathrm{RDS}}$  (resp. $\I_\Delta$) is the ideal of $(\A^0,\sha)$ generated by
$\{\reg_\sha(u\sha v-u \ast v)\mid u\in\A^1, v\in\A^0\}$
(resp. $\{w-\tau_\infty(w)\mid w\in\A^0\}$
where $\tau_\infty$ is the anti-automorphism of $\A$ sending $\tau_\infty(e_0)=-e_1$, $\tau_\infty(e_1)=-e_0$)
(for definitions of $\reg_\sha$ and $\ast$, consult \cite{HS}).

In \cite{HS} Conjecture 24, it is conjectured that  the confluence relations
exhaust all the relations among MZV's, i.e. $\I_\CF=\ker\{L:\A^0\to\R\}$.

\section{The pentagon equation implies the confluence relations}
We prove that the pentagon equation implies the confluence relation under the commutator group-like condition
(Theorem \ref{thm:main 1}).

We regard $U\frak f_2=\Q\langle f_0,f_1\rangle$ as the dual of 
$\A=\Q\langle e_0,e_1\rangle$ 
by the pairing
\begin{equation}\label{eqn:pairing}
<\cdot|\cdot>:\A \otimes U\frak f_2\to \Q
\end{equation}
such that
$<W_1(e_0,e_1) \mid  W_2(f_0,f_1)>=\delta_{W_1,W_2}$ for any words $W_1,W_2$.
Here a word means a monic monomial element of the free monoid  generated by two elements, say $A$ and $B$.
For a word $W$, we denote $W(e_0,e_1)$ (resp. $W(f_0,f_1)$) to be the corresponding element in $\A$ (resp. $U\frak f_2$) obtained by the substitution of
$e_0,e_1$ (resp. $f_0,f_1$) to $A, B$.

\begin{thm}\label{thm:main 1}
Let $\varphi\in \exp [\hat{\mathfrak f}_2,\hat{\mathfrak f}_2].$
Assume that $\varphi$ satisfies the pentagon equation.
Then for any $l\in{\mathcal I}_{\CF}$ we have $<l \mid\varphi>=0.$
In other word,
for any $l\in{\mathcal I}_{\ST}$ we have
$<\lambda(l) \mid \varphi>=0.$
\end{thm}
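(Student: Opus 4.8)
The plan is to lift the single series $\varphi$ to a $z$-dependent generating series that plays, on the \lq motivic' side, the same role as the iterated-integral map $L$, and then to separate the assertion into two independent statements: a \emph{formal} part, that this series annihilates the standard relations $\I_\ST$, and a \emph{confluence} part, that its regularized limit as $z\to 1$ recovers the pairing $<\lambda(\cdot)\mid\varphi>$. Only the second part will use the pentagon equation. Concretely, I would construct a group-like series $\mathcal L_\varphi(z)$ in the completed concatenation algebra $\Q\langle\langle f_0,f_z,f_1\rangle\rangle$ dual to $\A_z$, pinned down by three requirements modelled on $L$: it is group-like (so that $<\,\cdot\mid\mathcal L_\varphi(z)>$ is a character for $\sha$); it solves the same differential equation as $L$,
\[
\frac{d}{dz}<l\mid\mathcal L_\varphi(z)>=\frac1z<\partial_{z,0}l\mid\mathcal L_\varphi(z)>+\frac1{z-1}<\partial_{z,1}l\mid\mathcal L_\varphi(z)>\qquad(l\in\A_z);
\]
and its behaviour as $z\to\infty$ reproduces $\varphi$, in the sense that the regularized constant term of $<l\mid\mathcal L_\varphi(z)>$ at $z=\infty$ equals $<\Const(l)\mid\varphi>$ for every $l$. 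Such a series exists and is unique: the universal group-like solution of a regular-singular equation is determined up to a right group-like constant, and that constant is fixed by the $z\to\infty$ data, which is itself a character for $\sha$ because $\varphi$ is group-like.

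The formal part is then a transcription of the identity $L(\I_\ST)=0$ recalled in \S\ref{sec:confluence relation}. If $l\in\I_\ST$, then all the iterated constant terms $\Const(\partial_{z,\alpha_1}\cdots\partial_{z,\alpha_r}l)$ vanish, so by the third requirement all the regularized constant terms at $z=\infty$ of the functions $<\partial_{z,\alpha_1}\cdots\partial_{z,\alpha_r}l\mid\mathcal L_\varphi(z)>$ vanish; as these are precisely the data determining the regular-singular expansion of $<l\mid\mathcal L_\varphi(z)>$ at $z=\infty$, the differential equation forces $<l\mid\mathcal L_\varphi(z)>\equiv 0$. This step uses only the group-like, differential and constant-term structure, and not the pentagon equation.

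The heart of the matter is the confluence identity
\[
<\lambda(l)\mid\varphi>=\reg_{z\to1}<l\mid\mathcal L_\varphi(z)>\qquad(l\in\A_z^0),
\]
where the right-hand side is the regularized value of the $\varphi$-polylogarithm as $z\to1$. To prove it I would run through the recipe defining $\lambda$ and realise each constituent map as an operation on $\mathcal L_\varphi(z)$: the anti-automorphism $\tau_z$ as the relabelling symmetry of the triple $\{0,z,1\}$ exchanging $0$ and $1$ and fixing $z$; the regularizations $\reg_{z,1},\reg_z$ as the tangential-base-point normalizations stripping the $\log(1-z)$-divergences near $z=1$; and $\Const$ together with $|_{z\to1}$ as the two boundary specializations $z\to\infty$ and $z\to1$. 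The assertion is that, after these normalizations, the $z\to1$ asymptotics of $\mathcal L_\varphi(z)$ factor through $\varphi$ precisely via the composite $\lambda$; this factorization is what the pentagon equation supplies, because the configuration of the integration variable together with the moving singularity $z$ against the fixed $0,1,\infty$ is a point of $\M_{0,5}$, the collision $z\to1$ is one of its boundary divisors, and the compatibility of the connection data of $\mathcal L_\varphi(z)$ around $z=0,1,\infty$ needed to express the $z\to1$ regularization through $\varphi$ is exactly the coherence encoded by $\varphi_{345}\varphi_{512}\varphi_{234}\varphi_{451}\varphi_{123}=1$ in $\widehat{U\mathfrak P_5}$. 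Granting the identity, the theorem is immediate: for $l\in\I_\ST$ the right-hand side vanishes by the formal part, so $<\lambda(l)\mid\varphi>=0$ and hence $<l'\mid\varphi>=0$ for every $l'\in\I_\CF=\lambda(\I_\ST)$.

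The step I expect to be the main obstacle is precisely this confluence identity, and specifically the bookkeeping it demands: one must reduce the five-point relation in $\widehat{U\mathfrak P_5}$ to a clean one-parameter functional equation for $\mathcal L_\varphi(z)$ in the three-letter algebra $\A_z$, and then match it term by term with the Hirose--Sato normalizations $\reg_{z,1}$, $\reg_z$, $\tau_z$ and the prescribed ordering of the limits $z\to1$ and $z\to\infty$. Tracking the shuffle-regularization constants all the way through the composite $\lambda$, with no transcendental realization of $\mathcal L_\varphi(z)$ available to check against, is where the genuine work of the proof lies.
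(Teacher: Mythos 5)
Your architecture (a $\varphi$-valued polylogarithm $\mathcal L_\varphi(z)$, a pentagon-free ``formal part'' killing $\I_\ST$, and a ``confluence identity'' at $z\to1$) is appealing, and the formal part is indeed correct: since $\partial_{z,\alpha}(\I_\ST)\subset\I_\ST$ and $\Const(\I_\ST)=0$, a triangular induction on degree gives $<l\mid\mathcal L_\varphi(z)>\equiv0$ for $l\in\I_\ST$, exactly mirroring Hirose--Sato's proof that $L(\I_\ST)=0$. But the heart of your proposal, the identity $<\lambda(l)\mid\varphi>=\reg_{z\to1}<l\mid\mathcal L_\varphi(z)>$, is not just unproven --- it is \emph{false} for the object your conditions (1)--(3) actually determine. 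Those conditions let $\varphi$ enter only through the regularized initial data at $z=\infty$; the transport from $z=\infty$ to $z=1$ is then the universal analytic one, whose connection constants are genuine multiple zeta values independent of $\varphi$. Concretely, take $\varphi=1$, which lies in $\exp[\hat{\mathfrak f}_2,\hat{\mathfrak f}_2]$ and satisfies the pentagon equation, and take $l=e_0e_z\in\A_z^0$. Solving your ODE with your $z=\infty$ normalization gives $<e_0e_z\mid\mathcal L_1(z)>=-\mathrm{Li}_2(1/z)$, so
\begin{equation*}
\reg_{z\to1}<e_0e_z\mid\mathcal L_1(z)>=-\zeta(2)\neq 0=<\lambda(e_0e_z)\mid 1>,
\end{equation*}
since $\lambda(e_0e_z)$ is homogeneous of weight $2$. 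The pentagon cannot rescue this: it makes the various $\varphi$-transports on $\overline{\M_{0,5}}$ mutually coherent, but it does not identify $\varphi$-transport with analytic parallel transport --- that identification holds only for $\varphi=\Phi_{KZ}$. (A structural cross-check: were some version of your identity provable for all commutator group-like $\varphi$, your formal part would force \emph{every} such $\varphi$ to satisfy the confluence relations, contradicting Theorem \ref{thm:main 2}.)

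The repair is to replace $\reg_{z\to1}$ of the analytic solution by an algebraically defined $\varphi$-transport, and proving that this matches Hirose--Sato's composite $\lambda$ (through $\reg_{z,1}$, $\reg_z$, $\tau_z$ and the two specializations) is precisely the work your sketch defers. It is the content of the paper's Proposition \ref{prop: key formula}, which expresses $<\lambda(l)\mid\varphi>$ as the purely algebraic pairing $<j_2(l)\mid\varphi_{243}^{-1}\varphi_{215}\varphi_{534}>$, proved via the bar construction of $\M_{0,5}$, the Oi--Ueno decompositions, the asymptotics \eqref{eqn:rho=Li}, and the comparison of the involution $\tau$ with $S\circ\tau_z$ (Lemma \ref{lem:new involution}). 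Note also that the division of labor there is the opposite of yours: that key formula needs only the commutator group-like condition, \emph{no} pentagon; the pentagon (together with the $2$-cycle relation, which \cite{F10} derives from it) enters afterwards, to rewrite $\varphi_{243}^{-1}\varphi_{215}\varphi_{534}=\varphi_{351}\varphi_{124}$, against which the upgraded standard relations of Lemma \ref{basis presentation} pair to zero. So your proposal is a plausible-looking reorganization, but as it stands the confluence identity is the wrong statement about the wrong object, and fixing it amounts to rebuilding the two-variable $\M_{0,5}$ machinery that constitutes the paper's actual proof.
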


\subsection{Bar construction and Oi-Ueno decomposition}
We prepare basic tools of bar-constructions of varieties,
$\mathcal{M}_{0,5}$, $\mathcal{M}_{0,4}$, $\X(z)$, $\Y(w)$
and recall two decompositions of the reduced bar algebra $\B$ of
$\mathcal{M}_{0,5}$ 
by Oi-Ueno \cite{OU}.

We consider three varieties.
Let $\mathcal{M}_{0,4}:
=\{(p_1,\dots,p_4)\in \mathbb P^1(\Q)^4 \mid p_i\neq p_j\}/ \mathrm{PGL}(2)$
and $\mathcal{M}_{0,5}:=\{(p_1,\dots,p_5)\in \mathbb P^1(\Q)^5 \mid p_i\neq p_j\}/ \mathrm{PGL}(2)$
be the moduli spaces of the projective line $\mathbb P^1(\Q)$ with  $4$ and $5$ marked points respectively.
For $i=1,\dots,5$,
we have  the projections $\pr_i:\M_{0,5}\to \M_{0,4}$ induced by the omission of the $i$-th parametrized point respectively,
which are encoded with the action of the symmetric groups
${\mathfrak S}_4$  and ${\mathfrak S}_5$.
By taking the normalized coordinate $(0,x,1,\infty)$, we identify $\M_{0,4}$ with
the space
$\{x\in \Q \mid x\neq 0,1\}$ and
by taking the normalized coordinate $(0,w,z,1,\infty)$, we identify $\M_{0,5}$ with
$\{(w,z)\in \Q^2 \mid  z,w\neq 0,1, \ z\neq w \}.$
Put $\X(z):=\{w\in\Q \mid w\neq 0,1,z\}$ for a fixed $z$.
We often regard $\X(z)$ be a subspace of  $\M_{0,5}$
under $j_\X:w\mapsto (w,z)$.
Similarly we consider
$\Y(w):=\{z\in\Q \mid z\neq 0,1,w\}$ for a fixed $w$
and regard it as a subspace of $\M_{0,5}$
under $j_\Y:z\mapsto (\frac{1}{w},\frac{1}{z})$.

\begin{rem}
There is a geometric picture which might help intuitively our later arguments:
Our $\M_{0,5}$  is regarded to be $\overline{\M_{0,5}}\setminus \cup_{i\neq j} L_{ij}$.
Here $\overline{\M_{0,5}}$ is the stable compactification of
$\M_{0,5}$,
which is a blowing-up of $\mathbb P^1\times\mathbb P^1$ at
$(z,w)=(0,0)$, $(1,1)$, $(\infty,\infty)$.
Its real structure is depicted in Figure \ref{fig:M05}.
Here $L_{ij}$ means the boundary component given by $p_i=p_j$,
which is isomorphic to $\mathbb P^1(\Q)$.
Particularly $L_{45}$, $L_{15}$, $L_{14}$ are the exceptional divisors.
They are depicted as if they were hexagons but they represent $\mathbb P^1(\Q)$
which identify  interfacing vertices and edges.
We note that our parameter $t_{ij}\in \widehat{U\mathfrak P_5}$ corresponds to the local monodoromy around
$L_{ij}$.
For some 5-tuples $\{i,j,k,l,m\}=\{1,2,3,4,5\}$,
the symbol $\varphi_{klm}$ is depicted aside the oriented edge on $L_{ij}$
connecting the two vertices $v_{ijkl}=L_{ij}\cap L_{kl}$ and $v_{ijlm}=L_{ij}\cap L_{lm}$.
It represents  the image of $\varphi\in \widehat{U\frak f_2}$
under the embedding $\widehat{U\mathfrak f_2}\to\widehat{U\mathfrak P_5}$;
$f_0\mapsto t_{kl}$ and $f_1\mapsto t_{lm}$,
which is caused by the identification $\mathbb P^1(\Q)$ with $L_{ij}$
such that $0\mapsto v_{ijkl}$ and $1\mapsto v_{ijlm}$
and the inclusion $L_{ij}\hookrightarrow \overline{\M_{0,5}}$.
\end{rem}

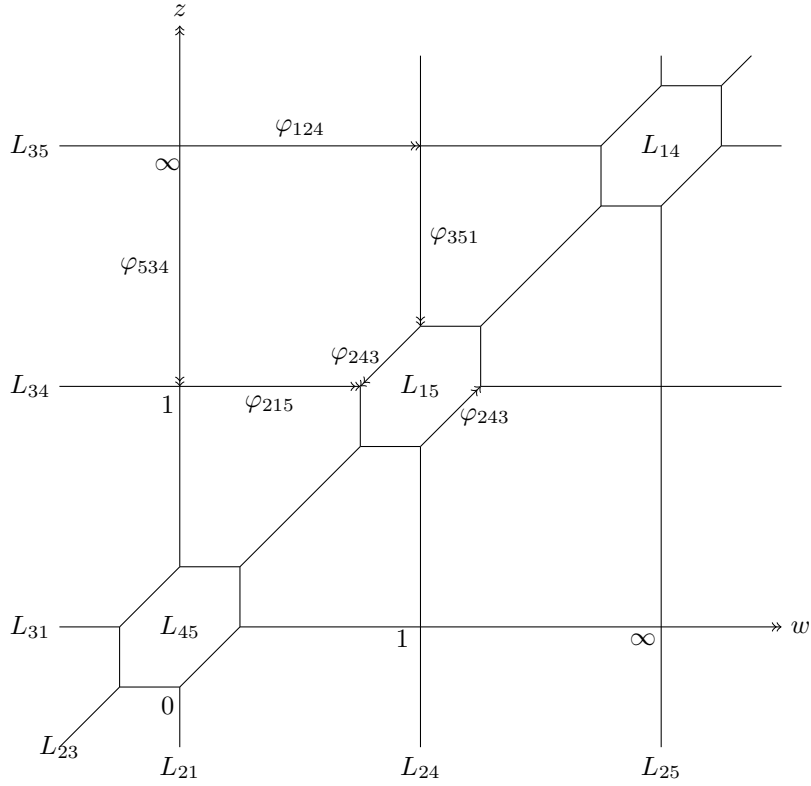
\begin{figure}[h]
\begin{center}
\begin{tikzpicture}[scale=0.8]
  \draw[-] (-1,0)--(-2,0) node[left]{$L_{31}$};

  \draw[-] (0,1)--(1,1); 
  \draw[-] (1,0)--(1,1);
  \draw[-] (1,0)--(0,-1); 
  \draw[-] (0,-1)--(-1,-1); 
  \draw[-] (-1,-1)--(-1,0);
  \draw[-] (-1,-1)--(-2,-2) node{$L_{23}$};
  \draw[-] (-1,0)--(0,1); 
  \draw[-] (0,0) node{$L_{45}$};

  \draw (-0.2,-1.3) node{$0$};
  \draw[-] (0,-1)--(0,-2) node[below]{$L_{21}$}; 
  \draw[-] (0,1)--(0,4); 
  \draw[-] (0,4)--(-2,4) node[left]{$L_{34}$};
  \draw (-0.2,3.7) node{$1$};
  \draw[->>] (0,4)--(3,4) node[midway,below]{$\varphi_{215}$};

  \draw[<<-] (0,4)--(0,8) node[midway,left]{$\varphi_{534}$};
  \draw[-] (0,8)--(-2,8) node[left]{$L_{35}$};
  \draw (-0.2,7.7) node{$\infty$};
  \draw[->>] (0,8)--(0,10) node[above]{$z$};

  \draw[-] (3,3)--(1,1); 
  
  \draw[-] (3,3)--(3,4) ; 
  \draw[<<-] (3,4)--(4,5) node[midway,left]{$\varphi_{243}$};
    \draw  (4,4) node{$L_{15}$} ;

  \draw[-] (4,5)--(5,5) ; 
 \draw[-] (5,5)--(5,4) ; 
 \draw[->>] (4,3)--(5,4) node[midway,right]{$\varphi_{243}$};
 \draw[-] (4,3)--(3,3) ;
 \draw[-] (5,4)--(10,4) ; 
  \draw[-] (5,5)--(7,7) ; 

  \draw[-] (4,0)--(4,-2) node[below]{$L_{24}$}; 
  \draw (3.7,-0.2) node{$1$};
  \draw[-] (4,0)--(4,3) ; 

  \draw[->>] (4,8)--(4,5) node[midway,right]{$\varphi_{351}$}; 
  \draw[-] (4,8)--(4,9.5) ; 
  \draw[->>] (0,8)--(4,8) node[midway,above]{$\varphi_{124}$};
 \draw[-] (4,8)--(7,8) ; 

  \draw[-] (7,7)--(7,8) ;
  \draw[-] (7,7)--(8,7) ;
  \draw[-] (8,7)--(9,8) ;
  \draw[-] (7,8)--(8,9) ; 
  \draw  (8,8) node{$L_{14}$} ;
  \draw[-] (8,9)--(9,9) ;
  \draw[-] (9,8)--(9,9) ;

  \draw[-] (9,8)--(10,8) ;
  \draw[-] (9,9)--(9.5,9.5) ;

  \draw[-] (8,7)--(8,-2) node[below]{$L_{25}$} (8,9)--(8,9.5);
  \draw (7.7,-0.2) node{$\infty$};

  \draw[->>] (1,0)--(10,0) node[right]{$w$} ;

\end{tikzpicture}
\caption{$\mathcal{M}_{0,5}(\mathbb{R})$}
\label{fig:M05}
\end{center}
\end{figure}

For $\M=\mathcal{M}_{0,5}$, $\mathcal{M}_{0,4}$, $\X(z)$ and $\Y(w)$
we consider its
Chen's \cite{C} reduced bar algebra  $H^0\bar B(\Omega^\ast_\mathrm{DR}(\M))$.
It is calculated to be the graded Hopf algebra $\B(\M)=\oplus_{m=0}^\infty \B(\M)_m$
($\subset T\B(\M)_1=\oplus_{m=0}^\infty \B(\M)_1^{\otimes m}$)
over $\Q$,
where
$\B(\M)_0=\Q$, $\B(\M)_1=H^1_{\mathrm{DR}}(\M)$ and
$\B(\M)_m$ is the totality of linear combinations (finite sums) \\
$$\sum_{I=(i_m,\cdots,i_1)}c_I[\omega_{i_m}|\cdots|\omega_{i_1}]
\in \B(\M)_1^{\otimes m}$$
($c_I\in \Q$, $\omega_{i_j}\in \B(\M)_1$,
$[\omega_{i_m}|\cdots|\omega_{i_1}]:=\omega_{i_m}\otimes\cdots\otimes\omega_{i_1}$)
satisfying the integrability condition
$$
\sum_{I=(i_m,\cdots,i_1)}c_I[\omega_{i_m}|\cdots
|\omega_{i_{j+1}}\wedge\omega_{i_{j}}|\cdots|\omega_{i_1}]=0
$$
in $\B(\M)_1^{\otimes m-j-1}\otimes H^2_\mathrm{DR}(\mathcal M)\otimes \B(\M)_1^{\otimes j-1}$
for all $j$ ($1\leqslant j<m$). 
Its product is given by the shuffle product $\sha$ and its coproduct $\delta$
is given by deconcatenation. 

\begin{lem}\label{lem:bar identifications}
We have the following isomorphisms of Hopf algebras:
\begin{align*}
& H^0\bar B(\Omega^\ast_\mathrm{DR}(\M_{0,4}))\simeq \A, \quad
H^0\bar B(\Omega^\ast_\mathrm{DR}(\X(z))\simeq \A_z, \quad 
H^0\bar B(\Omega^\ast_\mathrm{DR}(\Y(w))\simeq \A_w, \quad \\
& H^0\bar B(\Omega^\ast_\mathrm{DR}(\M_{0,5}))\simeq U\frak P_5^*.
\end{align*}
Here $U\frak P_5^*$ is the graded linear dual of $U\frak P_5$.
\end{lem}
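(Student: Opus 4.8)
All three identifications are instances of Chen's computation of the reduced bar algebra through the $\pi_1$-de Rham theorem, and the plan is to exploit that each of $\M_{0,4}$, $\X(z)$, $\M_{0,5}$ is formal, with de Rham cohomology generated in degree one subject only to quadratic (Arnold) relations. First I would dispose of the two curves, where the argument is forced, and then concentrate on the surface $\M_{0,5}$, which carries the genuine content.

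For the curves, note that $\M_{0,4}\cong\mathbb P^1\setminus\{0,1,\infty\}$ and $\X(z)\cong\mathbb P^1\setminus\{0,1,z,\infty\}$ are affine of dimension one, so $H^2_{DR}=0$ and the integrability condition cutting $\B(\M)_m$ out of $\B(\M)_1^{\otimes m}$ is vacuous. Hence $\B(\M)$ is the full tensor algebra on $H^1_{DR}$, with its shuffle product and deconcatenation coproduct. Identifying $H^1_{DR}(\M_{0,4})=\Q\tfrac{dx}{x}\oplus\Q\tfrac{dx}{x-1}$ by $e_0\mapsto\tfrac{dx}{x}$, $e_1\mapsto\tfrac{dx}{x-1}$, and $H^1_{DR}(\X(z))$ with the span of $\tfrac{dw}{w}$, $\tfrac{dw}{w-1}$, $\tfrac{dw}{w-z}$ by $e_0,e_1,e_z$, yields the Hopf isomorphisms $\B(\M_{0,4})\simeq\A$ and $\B(\X(z))\simeq\A_z$.

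For $\M_{0,5}$, in the coordinates $(z,w)$ it is the complement in $\Q^2$ of the five lines $z=0$, $z=1$, $w=0$, $w=1$, $w=z$, hence a hyperplane-arrangement complement, which is formal as all complex hyperplane-arrangement complements are; here $\dim H^1_{DR}=5$ and $\dim H^2_{DR}=6$, with $H^2_{DR}$ the quotient of $\Lambda^2 H^1_{DR}$ by the Arnold relations among the logarithmic forms $\omega_{ij}$ having residue along the divisors $L_{ij}$. The core of the plan is Chen's $\pi_1$-de Rham theorem together with formality: $H^0\bar B(\Omega^\ast_\mathrm{DR}(\M_{0,5}))$ is canonically the graded Hopf dual of the universal enveloping algebra $U\mathfrak h$ of the holonomy Lie algebra $\mathfrak h$ of the arrangement. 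Under this duality the integrability condition---wedging adjacent factors into $H^2_{DR}$ and demanding that the result vanish---is exactly the annihilator of the quadratic relations presenting $\mathfrak h$, while the shuffle and deconcatenation structure on the bar side is dual to the concatenation product and the cocommutative coproduct for which $\mathfrak h$ is primitive in $U\mathfrak h$. It then remains to identify $\mathfrak h$ with $\frak P_5$: the $\omega_{ij}$ are dual to the $t_{ij}$ (the local monodromy classes around $L_{ij}$), $t_{ij}=t_{ji}$ is automatic, the linear relations $\sum_j t_{ij}=0$ are dual to the dependencies cutting the ten forms $\omega_{ij}$ down to the five-dimensional $H^1_{DR}$, and the Arnold relations in $H^2_{DR}$ dualize to $[t_{ij},t_{kl}]=0$ for $\{i,j\}\cap\{k,l\}=\emptyset$. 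This is the classical Arnold--Kohno description of the holonomy Lie algebra of $\M_{0,5}$, giving $\mathfrak h\simeq\frak P_5$ and hence $H^0\bar B(\Omega^\ast_\mathrm{DR}(\M_{0,5}))\simeq U\frak P_5^*$.

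The main obstacle is the $\M_{0,5}$ case, and within it the two intertwined verifications: that the cup product $\Lambda^2 H^1_{DR}\to H^2_{DR}$ is dual to the bracket on $\mathfrak h_1$, so that the bar integrability condition genuinely reproduces the holonomy relations, and that the resulting Lie algebra is presented exactly by the defining relations of $\frak P_5$. Both rest on the formality of $\M_{0,5}$ together with careful bookkeeping of the ten classes $t_{ij}$ against the divisors $L_{ij}$---including the five at infinity created by the blow-up at $(0,0)$, $(1,1)$, $(\infty,\infty)$---and a check of signs and of the $\Z/5$-indexing. Once $H^2_{DR}=0$ is noted, the two curve cases are by contrast immediate.
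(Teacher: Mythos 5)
Your proposal is correct and follows essentially the same route as the paper, which handles the two curve cases by direct computation (the integrability condition being vacuous since $H^2_{DR}=0$, so the bar algebra is the full shuffle--deconcatenation tensor algebra on $H^1_{DR}$) and delegates the $\M_{0,5}$ case to \cite{F11}, where the identification with $U\frak P_5^*$ is likewise obtained by dualizing the integrability condition against the Arnold--Kohno quadratic presentation of $\frak P_5$ as the holonomy Lie algebra of the five-line arrangement. Your additional appeal to formality of arrangement complements and Chen's $\pi_1$--de Rham theorem is sound, though slightly more machinery than strictly needed given the paper's already combinatorial definition of $\B(\M)$, which reduces the lemma to the graded linear-algebra duality you describe.
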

The first isomorphism  is given by the correspondence
$d\log x\mapsto e_0$, $d\log(x-1)\mapsto e_1$.
The second one is given by the correspondence
$d\log w\mapsto e_0$, $d\log(w-z)\mapsto e_z$, $d\log(w-1)\mapsto e_1$.
The third one is given by 
$d\log z\mapsto e_0$, $d\log(z-w)\mapsto e_w$, $d\log(z-1)\mapsto e_1$.
Though the last one is explained in \cite{F11},  we further investigate it below.

Put $\B:=H^0\bar B(\Omega^\ast_\mathrm{DR}(\M_{0,5}))$ and
fix the $\Q$-linear basis of $\B(\M_{0,5})_1$ as
$$e_{21}=d\log w, \
e_{23}=d\log (w-z), \
e_{24}=d\log (w-1), \
e_{31}=d\log z, \ 
e_{34}=d\log (z-1).$$
We note that $t_{21}, t_{23}, t_{24}, t_{31}, t_{34}$ forms a basis of
the degree $1$  part of $\widehat{U\frak P_5}$ and
$e_{21}, e_{23}, e_{24}, e_{31}, e_{34}$ gives its dual basis. 

Following the setting in \cite{OU},
we employ the symbols below:
\begin{align*}
&\eta_2   =d\log w= e_{21},   \quad
&\eta_{22} =d\log (1-w) =e_{24},  \phantom{xxxxxxxxxxx}
\\
&\eta_3    =d\log \frac{1}{z}=-e_{31}, 
& \eta_{33} =d\log(1-\frac{1}{z})=-e_{31}+e_{34}, 
\phantom{xxxxx}
\\
&\eta_{23} =d\log(1-\frac{w}{z})=-e_{31}+e_{23},
&\text{and }
\eta_{23}^{(2)}=\frac{-dw}{z-w}, \quad
\eta_{23}^{(3)}=\frac{dz}{z-w}-\frac{dz}{z}.
\end{align*}
We introduce the non-commutative polynomial algebras 
 $\A_z(\eta_2,\eta_{23}^{(2)},\eta_{22}):=\Q\langle \eta_2,\eta_{23}^{(2)},\eta_{22}\rangle$
(resp. $\A(\eta_3,\eta_{33}):=\Q\langle \eta_3,\eta_{33}\rangle$).
By the map $e_0\mapsto \eta_2$, $e_z\mapsto \eta_{23}^{(2)}$, $e_1\mapsto \eta_{22}$
(resp. $e_0\mapsto \eta_3$, $e_1\mapsto \eta_{33}$),
it is identified with $\A_z$  (resp. $\A$).
We set $\A^1_z(\eta_2,\eta_{23}^{(2)},\eta_{22})$, $\A^0_z(\eta_2,\eta_{23}^{(2)},\eta_{22})$
(resp. $\A^1(\eta_3,\eta_{33})$)
to be the subspaces corresponding to $\A_z^1$ and $\A_z^0$
(resp. $\A^1$)
under the identification.
Similarly we define corresponding subspaces for 
$\A_w(\eta_3,\eta_{23}^{(3)},\eta_{33}):=\Q\langle \eta_3,\eta_{23}^{(3)},\eta_{33}\rangle$
(resp. $\A(\eta_2,\eta_{22}):=\Q\langle\eta_2,\eta_{22}\rangle$)
by the identification with $\A_z$ (resp. $\A$)
given by 
$e_0\mapsto \eta_{3}$, $e_z\mapsto \eta_{23}^{(3)}$, $e_1\mapsto \eta_{33}$
(resp. $e_0\mapsto \eta_2$, $e_1\mapsto \eta_{22}$).

\begin{lem}\label{lem:dec2 and dec3}
There are 
the following isomorphisms of shuffle $\Q$-algebras
\begin{align*}
\dec_2:
=(\pr^{(2)}_{2\otimes 3}\otimes \pr^{(3)}_{2\otimes 3})\circ\delta: 
& \
\B\simeq
\A_z(\eta_2,\eta_{23}^{(2)},\eta_{22})\otimes_\Q\A(\eta_3,\eta_{33}), \\
\dec_3:
=(\pr^{(3)}_{3\otimes 2}\otimes \pr^{(2)}_{3\otimes 2})\circ\delta: 
& \
\B\simeq
\A_w(\eta_3,\eta_{23}^{(3)},\eta_{33})\otimes_\Q\A(\eta_2,\eta_{22}).
\end{align*}
\end{lem}

Here 
$\pr^{(2)}_{2\otimes 3}$ is the algebra homomorphism
sending $\eta_{3}, \eta_{33}\mapsto 0$, $\eta_{23}\mapsto\eta_{23}^{(2)}$
and
$\pr^{(3)}_{2\otimes 3}$ is the algebra homomorphism
sending $\eta_{2}, \eta_{23},\eta_{22}\mapsto 0$. 
Similarly the algebra homomorphisms
$\pr^{(3)}_{3\otimes 2}$ (resp. $\pr^{(2)}_{3\otimes 2}$) is defined by
$\eta_{2}, \eta_{22}\mapsto 0$, $\eta_{23}\mapsto\eta_{23}^{(3)}$
(resp. $\eta_{3}, \eta_{23}, \eta_{33}\mapsto 0$).

\begin{proof}
This is nothing but a reformulation of \cite{OU} Proposition 9.3, where
they employ the different coordinate $(z_1,z_2)$ corresponding  to our coordinate $(w,\frac{1}{z})$.
Actually their terminologies
are translated to ours as follows:
$$
\xi_1=\eta_2, \
\xi_{11}=-\eta_{22}, \
\xi_2=\eta_3, \
\xi_{22}=-\eta_{33}, \ 
\xi_{12}=-\eta_{23}
$$
and $\iota_{1\otimes 2}=\dec_2$, $\iota_{2\otimes 1}=\dec_3$
\end{proof}

We associate the above two decompositions with the following inclusions and projections of shuffle algebras
\begin{align*}
i_2:\A\hookrightarrow \B, \qquad
j_2:\A_z\hookrightarrow \B, \qquad   & r_2:\B\twoheadrightarrow \A_z, 
\\
i_3:\A\hookrightarrow \B, \qquad
j_3:\A_w\hookrightarrow \B, \qquad   & r_3:\B\twoheadrightarrow \A_w.
\end{align*}
In  precise,  they are defined by 
\begin{align*}
i_2&:\A\simeq \A(\eta_3,\eta_{33})\overset{1\otimes \id}{\hookrightarrow} 
\A_z(\eta_2,\eta_{23}^{(2)},\eta_{22})\otimes_\Q\A(\eta_3,\eta_{33})
\overset{\dec_2^{-1}}\simeq \B, \\
j_2&:\A_z\simeq \A_z(\eta_2,\eta_{23}^{(2)},\eta_{22})\overset{\id\otimes 1}{\hookrightarrow} 
\A_z(\eta_2,\eta_{23}^{(2)},\eta_{22})\otimes_\Q\A(\eta_3,\eta_{33})
\overset{\dec_2^{-1}}\simeq \B, \\
r_2&:\B\overset{\dec_2}\simeq 
\A_z(\eta_2,\eta_{23}^{(2)},\eta_{22})\otimes_\Q\A(\eta_3,\eta_{33})
\overset{\id\otimes\epsilon_\A}{\twoheadrightarrow} \A_z(\eta_2,\eta_{23}^{(2)},\eta_{22})
\simeq \A_z,
\end{align*}
where $\epsilon_\A$ is the augmentation map.
The maps  $i_3$, $j_3$ $r_3$ are defined similarly.
We have  $r_2\circ j_2=\id$ and $r_3\circ j_3=\id$.
We note that
$i_2$ (resp. $i_3$) is the Hopf algebra homomorphism
induced by $\pr_2\circ (3,4)$ (resp. $\pr_3$)
and 
$r_2$ (resp. $r_3$) is the Hopf algebra homomorphism
induced by the embedding $j_\X:\X(z)\hookrightarrow \M_{(0,5)}$
(resp. $j_\Y:\Y(w)\hookrightarrow \M_{(0,5)}$).
By our construction, any element $b\in\B$ is decomposed as
$b=\sum_ki_2(a_k)\sha j_2(a'_k)$ with $a_k\in\A$ and $a'_k\in\A_z$.

We denote $U\frak f_z=\Q\langle f_0,f_z,f_1\rangle$ to be the universal enveloping algebra 
of the free Lie algebra  $\frak f_z$ 
of three variables $f_0, f_z, f_1$ and regard it 
as the linear dual of $\A_z$ by the pairing $<\cdot|\cdot>$.
Since $r_2$ is induced  by the embedding $\X(z)\hookrightarrow \M_{(0,5)}$,
we have
\begin{equation}\label{eqn:on r2}
<r_2(l')|\varphi'>=<l'|r_2(\varphi')>
\end{equation}
for any $l'\in\B$ and $\varphi'\in U\frak f_3$.
Here $r_2$ on the right hand side means
the induced map $U\frak f_z\hookrightarrow U\frak P_5$
sending $f_0\mapsto  t_{21}$, $f_z\mapsto t_{23}$, $f_1\mapsto t_{24}$. 
Similarly we have
\begin{equation}\label{eqn:on r3}
<r_3(l')|\varphi'>=<l'|r_3(\varphi')>
\end{equation}
for any $l'\in\B$ and $\varphi'\in U\frak f_w:=\Q\langle f_0,f_w,f_1\rangle$.
Here $r_3$ on the right hand side means
the induced map $U\frak f_w\hookrightarrow U\frak P_5$
sending $f_0\mapsto  -t_{31}$, $f_w\mapsto -t_{31}+t_{23}$, $f_1\mapsto -t_{31}+t_{34}$. 

For $k=1,\dots,5$,
we denote  $\pr_k:\widehat{U{\mathfrak P}_5}\to \widehat{U{\mathfrak f}_2}$
to be the projection  induced from $\pr_k:\M_{0,5}\to \M_{0,4}$,
which actually sends $t_{ij}$ to $0$ when $i=k$ or $j=k$. 
Since $i_2$ and $i_3$ are induced from 
$\pr_2\circ (3,4)$ and $\pr_3$ respectively,
we have
\begin{equation}\label{eqn:on i2}
<i_2(l)|\varphi'>=<l \ | \ \pr_2\circ (3,4) (\varphi')>, \quad
<i_3(l)|\varphi'>=<l\ | \ \pr_3(\varphi')>
\end{equation} 
for any $l\in\A$ and $\varphi'\in \widehat{U{\mathfrak P}_5}$.
For our later use, we also consider the inclusion 
$i_4:\A\to \B$ of Hopf algebras sending
$e_0\mapsto e_{21}-e_{31}$ and
$e_1\mapsto e_{23}-e_{31}$,
which is induced from the projection
$\pr_4:\M_{0,5}\to\M_{0,4}$.
It induces an identification of pairings
\begin{equation}\label{eqn:on i4}
<i_4(l)|\varphi'>=<l\ | \ \pr_4(\varphi')>
\end{equation}

We put $\B^1$ 
(N.B. it was denoted by $\B^0$ in \cite{OU})
to be the subalgebra of $\B$
generated by elements which have no terms ending with $\eta_2$ and $\eta_3$.

\begin{lem}
We have the following decompositions:
\begin{align}
\label{eqn:dec2}
&\dec_2(\B^1)=
\A^1_z(\eta_2,\eta_{23}^{(2)},\eta_{22})\otimes_\Q\A^1(\eta_3,\eta_{33}),\\
\label{eqn:dec3}
&\dec_3(\B^1)=
\A^1_w(\eta_3,\eta_{23}^{(3)},\eta_{33})\otimes_\Q\A^1(\eta_2,\eta_{22}).
\end{align}
\end{lem}

\begin{proof}
It is stated in  \cite{OU} Proposition 9.4.
We get the claim by restriction of the isomorphisms of Lemma \ref{lem:dec2 and dec3}
to $\B^1$.
\end{proof}
It follows
\begin{equation}\label{B0A0}
\B^1\cap j_2(\A_z)= j_2(\A_z^1)\supset j_2(\A_z^0)
\end{equation}
and also
$\B^1\cap j_3(\A_w)= j_3(\A_w^1)\supset j_3(\A_w^0)$.

\subsection{Differentials and multiple polylogarithms}
We introduce the linear operator $\partial_\alpha$ ($\alpha=0,1$) on $j_3(A_w)$
which extends to the $i_3(\A)$-linear operator  $\tilde\partial_\alpha$ on $\B$
and show that it restricts to 
Hirose-Sato's differential operator
$\partial_{z,\alpha}$
on $j_2(\A_z)$ 
by showing how they are connected to multiple polylogarithms.

Let $\mathcal P_{(\infty,0)}^{(z,w)}(\M_{0,5})$ to the set of piece-wise smooth paths
from $(\infty,0)$ to $(z,w)$ on $\M_{0,5}$.
We consider the $\Q$-linear map
$$\tilde\rho_5:T\B(\M_{0,5})_1\to
\mathrm{Map}(\mathcal P_{(\infty,0)}^{(z,w)}(\M_{0,5}), \C).
$$
defined by regularized iterated integral with tangential basepoints
(for its treatment see also \cite{De}\S 15).
Particularly it is given by
\begin{equation*}\label{iterated integral map}
\tilde\rho_5([\omega_{i_m}|\cdots|\omega_{i_1}])(\gamma)=
\int_{0<t_1< \cdots <t_m<1}
\omega_{i_m}({\gamma(t_m)})\cdot
\omega_{i_{m-1}}({\gamma(t_{m-1})})\cdot\cdots
\omega_{i_1}({\gamma(t_1)})
\end{equation*}
when the integral is convergent.
We define 
$
I^{(z,w)}_{(\infty,0)}({\mathcal M_{0,5}})
$
to be 
the homotopy invariant part of $\mathrm{Im}\ \tilde\rho_5$.
We often regard  it to be a subspace of 
$\mathrm{Map}\left(\pi_1\left(\M_{0,5};(\infty,0),(z,w)\right), \C\right)$.
Then by  Chen's theory \cite{C},
we have a shuffle (actually Hopf) algebra isomorphism 
$$\rho_5:H^0\bar B(\Omega^\ast_\mathrm{DR}(\M_{0,5}))\simeq I^{(z,w)}_{(\infty,0)}({\mathcal M_{0,5}}).
$$
with $\rho_5=\tilde\rho_5|_{H^0\bar B(\Omega^\ast_\mathrm{DR}(\M_{0,5}))}$.
Similarly we have 
isomorphisms
\begin{align*}
&\rho_z:H^0\bar B(\Omega^\ast_\mathrm{DR}(\X(z)))\simeq I^{w}_{0}(\X(z)),\\
&\rho_w:H^0\bar B(\Omega^\ast_\mathrm{DR}(\Y(w)))\simeq I^{z}_{\infty}(\Y(w)).
\end{align*}

By the isomorphisms in Lemma \ref{lem:bar identifications},
we identify $\rho_5$, $\rho_z$, $\rho_w$ with the isomorphisms
$\B\simeq
I^{(z,w)}_{(\infty,0)}({\mathcal M_{0,5}})$,
$\A_z \simeq I^{w}_{0}(\X(z)) $,
$\A_w \simeq I^{z}_{\infty}(\Y(w)) $,
and use the same symbols. 

\begin{rem}\label{rem:rho and MPL}
(i).
We note that for each $l\in\A^0_z$
$$\rho_z(l)(\mathrm{dch}_{0,1})=L(l)
$$
where $\mathrm{dch}_{0,1}$ is
the straight line path  from $0$ to $1$ when $w=1$ and 
$L(l)$ is in \eqref{eqn:L}.

(ii).
For a multi-index ${\bf k}=(k_1,\dots,k_m)\in\N^{m}$, 
set ${\bf e}_{\bf k}(1)=e_0^{k_1-1}e_1\cdots e_0^{k_m-1}e_1$ and
${\bf e}_{\bf k}(z)=e_0^{k_1-1}e_z\cdots e_0^{k_m-1}e_z$.
Then the set 
$$
\{{\bf e}_{{\bf k}_1}(1){\bf e}_{{\bf k}_2}(z){\bf e}_{{\bf k}_3}(1)
\cdots {\bf e}_{{\bf k}_N}(\ast) \mid
N>0,m_1\geqslant 0, m_2,\dots,m_N>0,{\bf k}_i\in\N^{m_i}
\}
$$ ($\ast$ is $1$ or $z$ according to the parity of $N$)
forms a basis of $\A^1_z$. 
For such $l={\bf e}_{{\bf k}_1}(1){\bf e}_{{\bf k}_2}(z){\bf e}_{{\bf k}_3}(1)
\cdots {\bf e}_{{\bf k}_N}(\ast)$,
its image under $\rho_z$
is calculated to be 
$$
\rho_z(l)=
(-1)^{\sum_{i=1}^Nm_i}\cdot \ell_{{\bf k}_1,{\bf k}_2,\dots,{\bf k}_N}.
$$
Here $\ell_{{\bf k}_1,{\bf k}_2,\dots,{\bf k}_N}$
is the element in $I^{w}_{0}(\X(z))$
which is associated with the series
\begin{equation}\label{eqn:rho=Li}
\Li_{{\bf k}_1,{\bf k}_2,\dots,{\bf k}_N}
(w,{\bf 1}^{m_1-1},z^{-1},{\bf 1}^{m_2-1},z,{\bf 1}^{m_3-1},z^{-1},\dots,{\bf 1}^{m_N-1})
\end{equation}
(when $m_1=0$,  it stands for
$
\Li_{{\bf k}_2,\dots,{\bf k}_N}
({w}{z}^{-1},{\bf 1}^{m_2-1},z,{\bf 1}^{m_3-1},z^{-1},\dots,{\bf 1}^{m_N-1})
$),
that is,
the restriction of the {\it multiple polylogarithm}
$$
\Li_{k_1,\dots,k_m}(s_1,\dots,s_m):=\sum_{n_1>\cdots>n_m>0}
\frac{s_1^{n_1}\cdots s_m^{n_m}}{n_1^{k_1}\cdots n_m^{k_m}}
$$
to $(s_1,\dots,s_m)=(w,{\bf 1}^{m_1-1},z^{-1},{\bf 1}^{m_2-1},z,{\bf 1}^{m_3-1},z^{-1},\dots,{\bf 1}^{m_N-1})$
where $(k_1,\dots,k_m)$ is the juxtaposition of 
${\bf k}_1,{\bf k}_2,\dots,{\bf k}_N$.
Strictly speaking  \eqref{eqn:rho=Li} is 
merely a series converging
when $|w|<|z|$, however it determines an element of 
$I^{w}_{0}(\X(z))$ 
by its iterated integral presentation  
even if $|w|\geqslant |z|$.

(iii).
By abuse of notation, we denote 
$j_2: I^{w}_{0}(\X(z))\hookrightarrow I^{(z,w)}_{(\infty,0)}({\M_{0,5}})$
the embedding  induced from $j_2$ under the two isomorphisms $\rho_z$ and $\rho_5$:
\begin{equation}\label{eq:CD:rho and j}
\xymatrix{
\B\ar^-{\rho_5}@{->}[r] & I^{(z,w)}_{(\infty,0)}({\M_{0,5}}) \\
\A_z\ar^{j_2}@{^{(}->}[u]\ar^-{\rho_z}[r] &  I^{w}_{0}(\X(z))\ar^{j_2}@{^{(}->}[u]
}
\end{equation}
By our construction,
the image $j_2(\ell_{{\bf k}_1,{\bf k}_2,\dots,{\bf k}_N})$
is nothing but the element 
$\ell_{{\bf k}_1,{\bf k}_2,\dots,{\bf k}_N}^{w,z^{-1}}$
in
$I^{(z,w)}_{(\infty,0)}({\M_{0,5}})$
which is associated with the series 
regarding both $w$ and $z$ as variables in \eqref{eqn:rho=Li}.
Recursive differentiations of \eqref{eqn:rho=Li} with respect to $w$ and $z$
reveal the expression of $\rho_5^{-1}(j_2(\ell_{{\bf k}_1,{\bf k}_2,\dots,{\bf k}_N}))$
in $\B$.
The differentiations assure 
$\dec_2(\rho_5^{-1}(\ell_{{\bf k}_1,{\bf k}_2,\dots,{\bf k}_N}^{w,z^{-1}}))
=\rho_z^{-1}(\ell_{{\bf k}_1,{\bf k}_2,\dots,{\bf k}_N})\otimes 1$
because each term of 
$\rho_5^{-1}(\ell_{{\bf k}_1,{\bf k}_2,\dots,{\bf k}_N}^{w,z^{-1}})$
never ends in $\eta_3$ or $\eta_{33}$.

(iv).
Similarly we have an element 
$\ell_{{\bf k}_1,{\bf k}_2,\dots,{\bf k}_N}^{z^{-1},w}$
in
$I^{(z,w)}_{(\infty,0)}({\M_{0,5}})$
which is defined by 
$$
\Li_{{\bf k}_1,{\bf k}_2,\dots,{\bf k}_N}
(z^{-1},{\bf 1}^{m_1-1},w,{\bf 1}^{m_2-1},w^{-1},{\bf 1}^{m_3-1},w,\dots,{\bf 1}^{m_N-1})
$$
and is also equal to 
$j_3\circ\rho_w((-1)^{\sum_{i=1}^Nm_i}\cdot 
{\bf e}_{{\bf k}_1}(1){\bf e}_{{\bf k}_2}(w){\bf e}_{{\bf k}_3}(1)
\cdots {\bf e}_{{\bf k}_N}(\ast)
)$.
\end{rem}


We consider new operators $\partial_{\alpha}$ on $\A_w$
and $\tilde\partial_{\alpha}$ on $\B$ ($\alpha=0,1$) below:

\begin{defn}
(1).
For each $l\in\A_w$ with the presentation 
$l=[e_0|l_1]+[e_w|l_2]+[e_1|l_3]$ with $l_1,l_2,l_3\in\A_w$,
we define
$$
\partial_{0}(l)=-l_1-l_2-l_3\in\A_w,  \qquad 
\partial_{1}(l)=l_2+l_3\in\A_w,
$$
and
$\partial_{0}(1)=\partial_{1}(1)=0\in\A_w$,
which yield linear operators
$\partial_{0}$ and $\partial_{1}$ on $\A_w$.

(2). 
By using $\dec_3$,
we extend  $\partial_{\alpha}$
to $i_3(\A)$-linear (under the shuffle product)
operator $\tilde\partial_{\alpha}$ on $\B$ for $\alpha=0,1$:
$$
\tilde\partial_{\alpha}(j_3(l)\sha i_3(m))
=j_3(\partial_{\alpha}(l))\sha i_3(m)
$$
for $l\in\A_w$ and $m\in\A$.
\end{defn}

By Remark \ref{rem:rho and MPL} (iv),
the image of $l\in\B$ with the presentation
$l=[e_{31}|l_1]+[e_{34}|l_2]+[e_{23}|l_3]+[e_{21}|l_4]+[e_{24}|l_5]$
($l_1,\dots,l_5\in\B$)
is calculated to be
\begin{equation}\label{eq:tilde partial01}
\tilde\partial_{0}(l)=l_1\in\B,  \qquad 
\tilde\partial_{1}(l)=l_2+l_3\in\B.
\end{equation}


The following is a key proposition which says that our differential operator
$\tilde\partial_{\alpha}$
extends the one $\partial_{z,\alpha}$ of Sato-Hirose.
\begin{prop}\label{prop: two partials}
The operators
$\tilde\partial_{0}$ and $\tilde\partial_{1}$ on $\B$
restrict to the linear operators $\partial_{z,0}$ and $\partial_{z,1}$ on $\A_z$, that is, 
we have the following commutative diagram for $\alpha=0,1$:
$$
\xymatrix{
\B\ar^{\tilde\partial_{\alpha}}[r] &\B \\
\A_z\ar^{j_2}@{^{(}->}[u]\ar^{\partial_{z,\alpha}}[r] &\A_z\ar^{j_2}@{^{(}->}[u]
}
$$
\end{prop}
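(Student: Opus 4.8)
The plan is to verify the commutativity after passing to the analytic realization. Write $\rho\colon\B\to\mathrm{Hol}$ for Chen's iterated-integral realization of the reduced bar algebra of $\M_{0,5}$ as multivalued multiple polylogarithms in $(z,w)$ (the map alluded to in Remark \ref{rem:rho and MPL}, whose pullback along $j_2$ recovers $L$). The key structural input is the \emph{faithfulness} of the de Rham realization for the unipotent fundamental group, i.e. the injectivity of $\rho$; granting this, it suffices to prove $\rho\circ\tilde\partial_{z,\alpha}\circ j_2=\rho\circ j_2\circ\partial_{z,\alpha}$ as functions.

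First I would record the $z$-part of the connection on $\B$ coming from Chen's formula $d\rho(l)=\sum_\omega\omega\,\rho(l_\omega)$, where $l=\sum_\omega[\omega\,|\,l_\omega]$ is the decomposition by the leading one-form. Among the basis $e_{21},e_{23},e_{24},e_{31},e_{34}$ of $\B(\M_{0,5})_1$, only $e_{31}=\tfrac{dz}{z}$, $e_{34}=\tfrac{dz}{z-1}$ and $e_{23}=\tfrac{dz-dw}{z-w}$ carry a $dz$-component, so
\[
\partial_z\rho(l)=\frac1z\,\rho(l_1)+\frac1{z-1}\,\rho(l_2)+\frac1{z-w}\,\rho(l_3),
\]
where $l_1,l_2,l_3$ are the coefficients of $e_{31},e_{34},e_{23}$. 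These are exactly the pieces entering $\tilde\partial_{z,0}l=l_1$ and $\tilde\partial_{z,1}l=l_2+l_3$.

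Next, for $l=j_2(v)$ I would compute the left-hand side of this identity independently, directly from the iterated-integral presentation of $\rho(j_2v)(z,w)$, namely the polylogarithm of $v$ in the fiber variable $w$ with the upper endpoint kept at the running point $w$ (so that the specialization $w\to1$ returns $L(v)$). Differentiating in $z$ and integrating by parts exactly as in the computation of $\tfrac{d}{dz}L$ recalled in \S\ref{sec:confluence relation} (see \cite{HIST}), but now with the variable upper endpoint, should yield
\[
\partial_z\rho(j_2v)=\frac1z\,\rho(j_2\partial_{z,0}v)+\frac1{z-1}\,\rho(j_2Pv)+\frac1{z-w}\,\rho(j_2Qv).
\]
Here $Pv$ collects the deletions of an $e_z$ neighbouring an interior $e_1$ (the puncture $w=1$) and $Qv$ collects the deletions of the leading $e_z$ (the moving endpoint $w$); since the lower endpoint is frozen at $0$ the whole of $\partial_{z,0}$ stays attached to $\tfrac1z$, whereas the upper endpoint being the variable $w$ splits Hirose--Sato's operator as $\partial_{z,1}=P+Q$.

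Comparing the two displays and using that the rational functions $\tfrac1z,\tfrac1{z-1},\tfrac1{z-w}$ are supported on three distinct divisors — so that a vanishing linear combination with multiple-polylogarithm coefficients must have all coefficients zero, by taking residues along $z=0$, $z=1$, $z=w$ — together with the injectivity of $\rho$, I obtain $l_1=j_2\partial_{z,0}v$, $l_2=j_2Pv$ and $l_3=j_2Qv$. Hence $\tilde\partial_{z,0}j_2v=j_2\partial_{z,0}v$ and $\tilde\partial_{z,1}j_2v=l_2+l_3=j_2(P+Q)v=j_2\partial_{z,1}v$, which is the desired commutativity. The main obstacle is the third step: establishing the variable-endpoint differential formula rigorously — identifying $\rho\circ j_2$ with the expected multiple polylogarithm (this is where the description of $j_2$ through $\dec_2$ and \eqref{eqn:dec2} enters), controlling the multivaluedness so that integration by parts is legitimate, and pinning down the splitting $\partial_{z,1}=P+Q$ dictated by the confluence of the endpoint $w$ with the puncture $1$. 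Once this formula and the faithfulness of $\rho$ are in place, the residue separation in two variables closes the argument.
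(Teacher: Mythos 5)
Your proposal follows essentially the same route as the paper's proof: compute $\tfrac{d}{dz}\rho_5(l)$ from the leading-one-form decomposition on $\B$, compute $\tfrac{d}{dz}\rho_z(j_2 v)$ on the fiber with the variable upper endpoint $w$, and match the coefficients of $\tfrac1z$, $\tfrac1{z-1}$, $\tfrac1{z-w}$ — your splitting $\partial_{z,1}=P+Q$ is exactly the paper's identity $\partial_{z,1}=\partial'_{z,1}+\partial'_{z,w}$ of \eqref{eqn:two partials}. The step you flag as the main obstacle, the variable-endpoint differential formula, is precisely Goncharov's Theorem 2.1 (cited as \cite{Gon}), so it requires no new argument.
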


\begin{proof}
By the same arguments of the proof of \cite{Gon} Theorem 2.1
(for admissible indices), we have
\begin{align*}
dI(a_0;&a_1, \dots, a_m;a_{m+1}) \\
&=\sum_{i=1}^m
I(a_0;a_1,\dots, \Check{a}_i,\dots;a_{m+1})\cdot \{d\log (a_i-a_{i+1})-d\log (a_i-a_{i-1})\}
\end{align*}
for 
$I(a_0;a_1,\dots, a_m;a_{m+1})=\int_{a_0}^{a_{m+1}}
\frac{dt_m}{t_m-a_m}\wedge \cdots \wedge \frac{dt_1}{t_1-a_1}$.
Then we have
\begin{equation}\label{eq:rho z l'}
\frac{d}{dz}\rho_z(l')(\gamma')
=\frac{1}{z}\rho_z(\partial'_{z,0}(l'))(\gamma')
+\frac{1}{z-1}\rho_z(\partial'_{z,1}(l'))(\gamma')
+\frac{1}{z-w}\rho_z(\partial'_{z,w}(l'))(\gamma')
\end{equation}
for any $l'\in \A_z$
and $\gamma'\in \pi_1(\X(z),0,w)$, 
where $\partial'_{z,\alpha}$ ($\alpha\in\{0,1,z,w\}$) is the linear operator defined by
$$
\partial'_{z,\alpha}(e_{a_n}\cdots e_{a_1})=
\sum\nolimits_{i=1}^n\left(\delta_{\{a_i,a_{i+1}\},\{z,\alpha\}}-\delta_{\{a_{i-1},a_{i}\},\{z,\alpha\}}\right)e_{a_n}\cdots \Check{e}_{a_i}\cdots e_{a_1}
$$
with $a_0=0$ and $a_{n+1}=w$.
By comparing this with \eqref{eqn:partial z alpha}, we get
\begin{equation}\label{eqn:two partials} 
\partial_{z,0}=\partial'_{z,0} \text{  and  }
\partial_{z,1}=\partial'_{z,1}+\partial'_{z,w}.
\end{equation}

While, by definition, for $l\in\B$ with 
$l=[e_{31}|l_1]+[e_{34}|l_2]+[e_{23}|l_3]+[e_{21}|l_4]+[e_{24}|l_5]$
($l_1,\dots,l_5\in\B$),
we have
\begin{align*}
d \rho_5(l)(\gamma)
=\frac{d  z}{z}\rho_5(l_1)(\gamma)
&+\frac{d  z}{z-1}\rho_5(l_2)(\gamma)
+\frac{d  w-d  z}{w-z}\rho_5(l_3)(\gamma) \\
&+\frac{d  w}{w}\rho_5(l_4)(\gamma)
+\frac{d  w}{w-1}\rho_5(l_5)(\gamma)
\end{align*}
for any  $\gamma\in \pi_1({\mathcal M_{0,5}};{(\infty,0)}, (z,w))$.
Particularly we have
$$
\frac{d}{dz}\rho_5(l)(\gamma)
=\frac{1}{z}\rho_5(l_1)(\gamma)
+\frac{1}{z-1}\rho_5(l_2)(\gamma)
+\frac{1}{z-w}\rho_5(l_3)(\gamma).
$$
Hence 
for 
$L\in\B$
such that
\begin{equation}\label{eqn:differential2}
\frac{d}{dz}\rho_5(L)(\gamma)
=\frac{1}{z}\rho_5(L_1)(\gamma)
+\frac{1}{z-1}\rho_5(L_2)(\gamma)
+\frac{1}{z-w}\rho_5(L_3)(\gamma)
\end{equation}
with 
$L_i\in\B$
($i=1,2,3$),
we have
\begin{equation}\label{eqn:differential1}
\tilde\partial_{0}(L)=L_1  \quad\text{   and     }\quad
\tilde\partial_{1}(L)=L_2+L_3
\end{equation}
by \eqref{eq:tilde partial01}.

By Remark \ref{rem:rho and MPL}. (iii),
the restrictions of $\tilde\partial_{0}$ and $\tilde\partial_{1}$
to $I_{0}^w(\X(z))=\mathrm{Im} \ \rho_z$ by the inclusion $j_2$ 
uner the identifications given in \eqref{eq:CD:rho and j} 
are also calculated  by \eqref{eqn:differential2} and \eqref{eqn:differential1}.
Hence by \eqref{eq:rho z l'}
we have
\begin{equation}\label{eq:tilde partial 01=j2 partial'}
\tilde\partial_{0}(j_2(l'))=j_2(\partial'_{z,0}(l')), \quad 
\tilde\partial_{1}(j_2(l'))=j_2(\partial'_{z,1}(l')+\partial'_{z,w}(l'))
\end{equation}
for $l'\in\A_z$.

By \eqref{eqn:two partials} and \eqref{eq:tilde partial 01=j2 partial'},
we have
\begin{align*}
\tilde\partial_{0}(j_2(l'))
&
=j_2(\partial'_{z,0}(l'))  
=j_2(\partial_{z,0}(l'))  \\
\tilde\partial_{1}(j_2(l'))
&
=j_2(\partial'_{z,1}(l')+\partial'_{z,w}(l'))
=j_2(\partial_{z,1}(l')).
\end{align*}
Therefore our claim follows. 
\end{proof}

\subsection{Upgrading of the standard relations and the involution $\tau_z$}
We extend the set of  standard relations to a subspace of $\B$
and give its explicit presentation.
We also extend the involution $\tau_z$ in $\A_z$ to the one in $\B$
and show its property.

We start by preparing the $\Q$-linear subspace of $\B$  
\begin{equation}\label{eq:LST}
\tilde{\mathcal L}_{\mathrm{ST}}:=
\langle j_3(l-l_{1\to w})\sha i_3(m) \mid
l\in \A^1_w,\  m\in\A^1
\rangle_\Q.
\end{equation}
Here $l_{1\to w}$ means the image of $l$ by the algebra homomorphism
$\A_w\to \A_w$ sending $e_0\mapsto e_0$, $e_w\mapsto e_w$, $e_1\mapsto e_w$.

\begin{rem}\label{rem:MPL presentation of B and LST}
By the isomorphism $\rho_5:\B\simeq I^{(z,w)}_{(\infty,0)}({\mathcal M_{0,5}})$ 
and the decompositions $\dec_2$ and $\dec_3$,
the $\Q$-linear subspaces
$\B^1$ and $\tilde{\mathcal L}_{\mathrm{ST}}$  are described in terms of 
the elements of Remark \ref{rem:rho and MPL}:

(i).
By \eqref{eqn:dec2}, we see that
$\rho_5(\B^1)$ is linearly spanned by the basis 
$\ell_{{\bf k}_1,{\bf k}_2,\dots,{\bf k}_N}^{w,z^{-1}}\cdot \ell_{\bf h}^{z^{-1}}$
(where $\ell_{\bf h}^{z^{-1}}$ is the element 
in $I^{(z,w)}_{(\infty,0)}({\mathcal M_{0,5}})$ associated with 
$\Li_{\bf h}(z^{-1},1,1,\dots)$),
which is an element of $I^{(z,w)}_{(\infty,0)}({\mathcal M_{0,5}})$ determined by 
$$
\Li_{{\bf k}_1,{\bf k}_2,\dots,{\bf k}_N
}(w,{\bf 1}^{m_1-1},z^{-1},{\bf 1}^{m_2-1},z,\dots)
\cdot \Li_{\bf h}(z^{-1},1,1,\dots),
$$
for all multi-indices ${\bf h}$, ${\bf k}_1$,\dots, ${\bf k}_N$,
$N>0$ (${\bf h}$ and ${\bf k}_1$ can be empty).\\
While by \eqref{eqn:dec3}, similarly we  see that
$\rho_5(\B^1)$  is linearly spanned by the basis 
$\ell_{{\bf k}_1,{\bf k}_2,\dots,{\bf k}_N}^{z^{-1},w}\cdot \ell_{\bf h}^w$,
which is an element of $I^{(z,w)}_{(\infty,0)}({\mathcal M_{0,5}})$ determined by  
$$
\Li_{{\bf k}_1,{\bf k}_2,\dots,{\bf k}_N
}(z^{-1},{\bf 1}^{m_1-1},w,{\bf 1}^{m_2-1},w^{-1},\dots)
\cdot \Li_{\bf h}({w},1,1,\dots),
$$
for all multi-indices ${\bf h}$, ${\bf k}_1$,\dots, ${\bf k}_N$,
$N>0$ (${\bf h}$ and ${\bf k}_1$ can be empty). 

(ii).
Particularly
$\rho_5(\tilde{\mathcal L}_{\mathrm{ST}})$
is linearly spanned by the basis 
\begin{equation*}
\{\ell_{{\bf k}_1,{\bf k}_2,\dots,{\bf k}_N}^{z^{-1},w}-
\ell_{\emptyset,({\bf k}_1{\bf k}_2\dots{\bf k}_N)}^{z^{-1},w}\}
\cdot \ell_{\bf h}^w,
\end{equation*}
that is, the element of $I^{(z,w)}_{(\infty,0)}({\mathcal M_{0,5}})$
determined by 
\begin{align*}
&\left\{\Li_{{\bf k}_1,\dots,{\bf k}_N
}(z^{-1},{\bf 1}^{m_1-1},w,{\bf 1}^{m_2-1},w^{-1},\dots)-
\Li_{{\bf k}_1,\dots,{\bf k}_N
}(wz^{-1},{\bf 1}^{m_1+\cdots+m_N-1})
\right\} \\
&\qquad\quad
\cdot \Li_{\bf h}({w},1,1,\dots),
\end{align*}
for all multi-indices ${\bf h}$, ${\bf k}_1$,\dots, ${\bf k}_N$,
$N>0$ (${\bf h}$ and ${\bf k}_1$ can be empty). 
\end{rem}

We extend the set of standard relations to a subspace of $\B$.

\begin{defn}
(i).
Let $\epsilon:\A_w\to \Q$ be the augmentation map.
By using $\dec_3$, we extend $\epsilon$
to $i_3(\A)$-linear map
$\tilde\epsilon:\B \to \A$, that is,
$$
\tilde\epsilon(j_3(l)\sha i_3(m))
=j_3(\epsilon(l)) \sha i_3(m)
=\epsilon(l) i_3(m)
$$
for $l\in\A_w$ and $m\in\A$.

(ii).
We define the $\Q$-linear subspace of $\B$  
$$
\tilde{\mathcal I}_{\mathrm{ST}}:=\{l\in\B^1\ \mid 
\tilde\epsilon \ \tilde\partial_{\alpha_1}\cdots\tilde\partial_{\alpha_r}(l)
=0
\text{ for } r\geqslant 0,  \alpha_1,\dots,\alpha_r\in\{0,1\}
\}.
$$
\end{defn}

The map $\tilde\epsilon:\B\to\A$ is calculated to be
the shuffle algebra homomorphism
sending 
$$
e_{31}\mapsto 0, \quad
e_{23}\mapsto 0, \quad 
e_{34}\mapsto 0, \quad
e_{21}\mapsto e_{0}, \quad 
e_{24}\mapsto e_{1},
$$
which yileds
the following commutative  diagram: 
\begin{equation}\label{diag:Const}
\xymatrix{
\B\ar^{\tilde\epsilon}[r] &\A \\
\A_z\ar^{j_2}@{^{(}->}[u]\ar^{\Const}[r] &\A\ar@{=}[u]
}
\end{equation}

Then by \eqref{B0A0}, \eqref{diag:Const} and Proposition \ref{prop: two partials} we have
\begin{equation}\label{ISTA0}
\tilde{\mathcal I}_{\mathrm{ST}}\cap j_2(\A^0_z)=j_2({\mathcal I}_{\mathrm{ST}}).
\end{equation}
Actually $\tilde{\mathcal I}_{\mathrm{ST}}$ coincides with
our previous space $\tilde{\mathcal L}_{\mathrm{ST}}$ in \eqref{eq:LST}.

\begin{prop}\label{basis presentation}
We have 
$$
\tilde{\mathcal I}_{\mathrm{ST}}=
\tilde{\mathcal L}_{\mathrm{ST}}.
$$
\end{prop}

\begin{proof}
Since $i_3(\A)$-linear maps
$\tilde\epsilon$ and $\tilde\partial_\alpha$ are constructed 
as scalar extensions of $\Q$-linear maps $\epsilon$ and $\partial_\alpha$
and  we have $\B^1= j_3(\A^1_w)\sha i_3(\A^1)$,
we have
$$
\tilde\I_\ST=\{\tilde\I_\ST\cap j_3(\A^1_w)\}\sha i_3(\A^1).
$$
While we have
$$
\tilde{\mathcal L}_\ST=
\langle j_3(l-l_{1\to w}) \mid
l\in \A^1_w\rangle_\Q \sha
i_3(\A^1).
$$
It is enough to show 
$$\I_w={\mathcal L}_w$$ 
with
$
\I_w:=
\{l\in \A_w^1 \mid \epsilon \ \partial_{\alpha_1}\cdots\partial_{\alpha_r}(l)
=0
\text{ for } r\geqslant 0,  \alpha_1,\dots,\alpha_r\in\{0,1\}
\}
$
and
${\mathcal L}_w:=\langle l-l_{1\to w} \mid l\in \A^1_w\rangle_\Q$.

Put $P=e_0$, $Q=e_1+e_w$, $R=e_1-e_w$ in $\A_w$
and express any  element $l\in \A_w$ as
$f(P,Q,R)$ with a $3$ variables non-commutative polynomial $f$. 
By definition, $\l\in {\mathcal I}_w$ if and only if 
$\partial_{0}(l)\in {\mathcal I}_w$, 
$\partial_{1}(l)\in{\mathcal I}_w$
and $\epsilon(l)=0$.
It is  reformulated to be 
$f_1(P,Q,R),f_2(P,Q,R)\in \I_w$ and $f(0,0,0)=0$,
when it is given by  $f(P,Q,R)=Pf_1(P,Q,R)+Qf_2(P,Q,R)+Rf_3(P,Q,R)+f(0,0,0)$.
Actually the conditions are equivalent to $f(P,Q,0)=0$,
from which we learn 
$\I_w={\mathcal L}_w$.
\end{proof}

There is an action of the symmetric group $\frak S_5$ on $\M_{0,5}$
by permutation of 5 marked points.
Particularly we consider the involution $\tau=(1,4)(3,5)\in\frak S_5$.
It induces the involution  $\tau$ on $U\frak P_5$ by $t_{i,j}\mapsto t_{\tau(i),\tau(j)}$  and hence on its dual space $\B$ so that
for any $l\in U\frak P_5$ and $\phi\in\B$
\begin{equation}\label{pairing equation}
<\tau(l) |\phi >=<l|\tau(\phi)>.
\end{equation}

\begin{lem}\label{lem:new involution}
Let $S$ be the antipode of $\A_z$. The following diagram is commutative
$$
\xymatrix{
\B\ar^{\tau}[r]\ar_{r_2}@{->>}[d] &\B \ar^{r_2}@{->>}[d]\\
\A_z\ar^{S\circ \tau_z}[r] &\A_z 
}
$$
\end{lem}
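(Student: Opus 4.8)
The plan is to dualize the square and reduce it to an identity between two homomorphisms on the \emph{free} associative algebra $U\frak f_3=\Q\langle f_0,f_z,f_1\rangle$, where checking the three generators settles everything. Recall that $\A_z$ is the graded dual of $U\frak f_3$ and $\B$ the graded dual of $U\frak P_5$, and that two adjointness relations are already at our disposal: by \eqref{eqn:on r2} the transpose of $r_2:\B\to\A_z$ is the embedding $\iota:U\frak f_3\hookrightarrow U\frak P_5$ with $f_0\mapsto t_{21}$, $f_z\mapsto t_{23}$, $f_1\mapsto t_{24}$, and by \eqref{pairing equation} the transpose of $\tau$ on $\B$ is the map $\tau$ on $U\frak P_5$, $t_{ij}\mapsto t_{\tau(i)\tau(j)}$. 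Since these are graded maps of finite type, equality of two morphisms $\B\to\A_z$ is equivalent to equality of their transposes; so taking transposes of the asserted $r_2\circ\tau=(S\circ\tau_z)\circ r_2$ turns it into the claim
\[
\tau\circ\iota=\iota\circ(S\circ\tau_z)^{t}
\]
as maps $U\frak f_3\to U\frak P_5$, where $(S\circ\tau_z)^{t}=\tau_z^{t}\circ S^{t}$ is the transpose of $S\circ\tau_z\colon\A_z\to\A_z$.

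First I would pin down the right-hand transpose. The transpose of the antipode $S$ of the shuffle Hopf algebra $\A_z$ is the antipode of $U\frak f_3$, i.e. the concatenation anti-automorphism sending $f_i\mapsto -f_i$. For $\tau_z$ I would note it is word reversal followed by the letterwise substitution $e_0\mapsto e_z-e_1$, $e_1\mapsto e_z-e_0$, $e_z\mapsto e_z$; reversal is an anti-coautomorphism for the deconcatenation coproduct and the substitution is a coalgebra homomorphism, so $\tau_z$ is a coalgebra anti-homomorphism. As the antipode $S$ is likewise a coalgebra anti-homomorphism, the composite $S\circ\tau_z$ is a genuine coalgebra homomorphism of $(\A_z,\delta)$, whence $(S\circ\tau_z)^{t}$ is an algebra homomorphism of $U\frak f_3$; this is exactly the point that legitimizes checking the identity on generators only. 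A short dual-basis computation (pairing $\tau_z(e_i)$ against $f_j$) gives $\tau_z^{t}(f_0)=-f_1$, $\tau_z^{t}(f_z)=f_0+f_z+f_1$, $\tau_z^{t}(f_1)=-f_0$, and composing with $S^{t}$ yields $(S\circ\tau_z)^{t}\colon f_0\mapsto f_1$, $f_z\mapsto-(f_0+f_z+f_1)$, $f_1\mapsto f_0$.

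It then remains to compare the two sides on $f_0,f_z,f_1$. On the left, $\tau=(1,4)(3,5)$ gives $\tau\circ\iota\colon f_0\mapsto t_{24}$, $f_z\mapsto t_{25}$, $f_1\mapsto t_{21}$. On the right, $\iota\circ(S\circ\tau_z)^{t}$ sends $f_0\mapsto\iota(f_1)=t_{24}$, $f_1\mapsto\iota(f_0)=t_{21}$, and $f_z\mapsto-\iota(f_0+f_z+f_1)=-(t_{21}+t_{23}+t_{24})$. Here the defining relations of $\frak P_5$ enter: since $\sum_{j\in\Z/5}t_{2j}=0$ and $t_{22}=0$, we have $t_{21}+t_{23}+t_{24}=-t_{25}$, so the image of $f_z$ is $t_{25}$ as well. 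The two algebra homomorphisms thus agree on all three generators of the free algebra $U\frak f_3$, hence coincide, and transposing back gives the lemma.

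I would flag two points as the places demanding care. The first, and the real content of the computation, is the appearance of $t_{21}+t_{23}+t_{24}=-t_{25}$: it is precisely the $\frak P_5$-relation $\sum_j t_{2j}=0$ that makes the $f_z$-entries match, so the lemma secretly encodes this relation together with $\tau_z^{t}(f_z)=f_0+f_z+f_1$. The second is the bookkeeping ensuring $(S\circ\tau_z)^{t}$ is an algebra (not anti-algebra) homomorphism, which rests on $S$ and $\tau_z$ each being coalgebra anti-homomorphisms for deconcatenation so that their composite is a coalgebra homomorphism. As a consistency check one can compute the induced map geometrically: the involution $\tau$ preserves the fibration $\pr_2$ and acts on the fibre $\X(z)$ by the M\"obius involution $w\mapsto z(w-1)/(w-z)$, whose pullback on $H^1$ sends $d\log w\mapsto d\log(w-1)-d\log(w-z)$, $d\log(w-1)\mapsto d\log w-d\log(w-z)$, $d\log(w-z)\mapsto-d\log(w-z)$, reproducing $S\circ\tau_z$ in degree one.
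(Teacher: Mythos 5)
Your proof is correct, but it takes a genuinely different route from the paper's. The paper stays on the bar-construction side and argues geometrically: in the coordinates $(z,w)$ the involution $\tau=(1,4)(3,5)$ acts by $(z,w)\mapsto (z,\frac{z(w-1)}{w-z})$, so the induced map on $\B$ is the letterwise substitution $e_{21}\mapsto e_{31}+e_{24}-e_{23}$, $e_{23}\mapsto e_{31}+e_{34}-e_{23}$, $e_{24}\mapsto e_{21}+e_{34}-e_{23}$, $e_{31}\mapsto e_{31}$, $e_{34}\mapsto e_{34}$; since $\tau$ stabilizes the fiber $\pr_2^{-1}(z)=\X(z)$, where it restricts to the M\"{o}bius involution $t\mapsto \frac{z(t-1)}{t-z}$, the induced map on $\A_z$ is $e_0\mapsto e_1-e_z$, $e_z\mapsto -e_z$, $e_1\mapsto e_0-e_z$, which is recognized as $S\circ\tau_z$, and commutativity is then read off from the explicit degree-one formula for $r_2$. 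You instead transpose the whole square through the pairings, invoking \eqref{eqn:on r2} and \eqref{pairing equation}, and reduce the claim to an equality of two algebra homomorphisms on the free algebra $U\frak f_3$, checked on the three generators; the step legitimizing this reduction — that $S$ and $\tau_z$ are each coalgebra anti-morphisms for deconcatenation, so $S\circ\tau_z$ is a coalgebra morphism and its transpose is multiplicative — is handled correctly, and your generator computations check out: $\tau_z^{t}(f_0)=-f_1$, $\tau_z^{t}(f_z)=f_0+f_z+f_1$, $\tau_z^{t}(f_1)=-f_0$, with the $f_z$-entries matching precisely because $t_{21}+t_{23}+t_{24}=-t_{25}$ in $\frak P_5$. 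As for what each approach buys: your dual argument is more self-contained relative to the paper's definitions, since $\tau$ on $\B$ is \emph{defined} as the transpose of the index permutation on $U\frak P_5$, and you never need to identify that transpose with the geometric pullback of differential forms — a compatibility (via the monodromy dictionary between $t_{ij}$ and $L_{ij}$) that the paper's proof uses implicitly when it ``presents'' $\tau$ on $\B$ in coordinates; moreover you make the role of the relation $\sum_{j}t_{2j}=0$ explicit, where in the paper it is hidden in that dictionary. The paper's proof, in exchange, produces the explicit coordinate description of $\tau$ on $\B$ and explains geometrically why $S\circ\tau_z$ is the right map on the fiber — which is exactly what your closing consistency check with the pullback of $d\log w$, $d\log(w-1)$, $d\log(w-z)$ reproduces.
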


\begin{proof}
In our coordinate $(z,w)$, the involution $\tau$ is described as
$(z,w)\mapsto (z,\frac{z(w-1)}{w-z})$.
Hence the induced involution $\tau$ on $\B$ is presented as
$e_{21}\mapsto e_{31}+e_{24}-e_{23}$, \
$e_{23}\mapsto e_{31}+e_{34}-e_{23}$, \
$e_{24}\mapsto e_{21}+e_{34}-e_{23}$, \
$e_{31}\mapsto e_{31}$, \
$e_{34}\mapsto e_{34}$.
While the action $\tau$ on  $\M_{0,5}$ 
stabilizes the fiber $\pr_2^{-1}(z)=\X(z)$ for a fixed $z$ and
induces the M\"{o}bius transformation $t\mapsto \frac{z(t-1)}{t-z}$ there.
It causes the involution on $\A_z$ given by 
$e_0\mapsto e_1-e_z$, $e_z\mapsto -e_z$, $e_1\mapsto e_0-e_z$
(cf. \cite{HIST}).
It coincides with  the automorphism $S\circ \tau_z$ of $\A_z$.
Since the map $r_2$ is given by
$r_2(e_{21})=e_0$, \ 
$r_2(e_{23})=e_z$, \ 
$r_2(e_{24})=e_1$, \ 
$r_2(e_{31})=0$, \ 
$r_2(e_{34})=0$,
we see that the diagram commutes.
\end{proof}

\subsection{Computations of the pairing}
We prove a key formula on the pairing in Proposition \ref{prop: key formula}
and then prove  Theorem \ref{thm:main 1}.

Since the triple $(\B,\sha,\delta)$ is the dual of the Hopf algebra 
$(U\frak P_5,\cdot, \Delta)$ with the standard coproduct given by 
$\Delta(t_{ij})= t_{ij}\otimes 1 + 1\otimes t_{ij}$,
we have
\begin{equation}\label{frequent equation1}
<l_1\sha l_2| \psi>=\sum\nolimits_i <l_1|\psi_1^{(i)}>\cdot<l_2|\psi_2^{(i)}>
\end{equation}
for $l_1,l_2\in \B$ and $\psi\in U\frak P_5$ with $\Delta(\psi)=\sum_i \psi_1^{(i)}\otimes \psi_2^{(i)}$, and
\begin{equation}\label{frequent equation2}
<l|\psi_1\cdot \psi_2>=\sum\nolimits_i <l_1^{(i)}|\psi_1>\cdot<l_2^{(i)}|\psi_2>
\end{equation}
for $\psi_1,\psi_2\in U\frak P_5$ and
$l\in \B$ with $\delta(l)=\sum_i l_1^{(i)}\otimes l_2^{(i)}$.
The same equations hold for the pairing $<\cdot | \cdot>$ between
$\A$ and $U\frak f_2$.

\begin{prop}\label{prop: key formula}
For $l\in \A_z^0$ and
$\varphi\in \exp [\hat{\mathfrak f}_2,\hat{\mathfrak f}_2]$,
we have
$$
<\lambda(l)|\varphi>=<j_2(l)|\varphi^{-1}_{243}\varphi_{215}\varphi_{534}>.
$$
\end{prop}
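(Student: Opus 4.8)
The plan is to evaluate the right-hand pairing, which lives in $\B\times U\mathfrak{P}_5$, and to carry it down to $<\lambda(l)|\varphi>$ in $\A\times U\mathfrak{f}_2$, matching the three factors $\varphi_{243}^{-1},\varphi_{215},\varphi_{534}$ against the three successive operations that build $\lambda$: the anti-automorphism $\tau_z$ occurring in $N$, the two shuffle-regularizations $\reg_{z,1}$ and $\reg_z$ (each of the form $u\otimes v\mapsto u\sha v$) together with their constant-term projections, and the specialization $|_{z\to 1}$. The tools are the Hopf dualities \eqref{frequent equation1}, \eqref{frequent equation2}, the adjointness \eqref{eqn:on r2} of $r_2$, the involution compatibility \eqref{pairing equation} with Lemma \ref{lem:new involution}, the Oi--Ueno decompositions \eqref{eqn:dec2}, \eqref{eqn:dec3}, and the group-like hypothesis on $\varphi$, which makes $<\cdot|\varphi'>$ a character for $\sha$. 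This last point, namely $<u\sha v|\varphi'>=<u|\varphi'>\,<v|\varphi'>$ for the group-like images $\varphi'$ of $\varphi$, is what lets each regularization isomorphism $u\otimes v\mapsto u\sha v$ pass across the pairing as a product, and it is the only place where $\varphi\in\exp[\hat{\mathfrak{f}}_2,\hat{\mathfrak{f}}_2]$ is used.

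First I would expand the threefold product by \eqref{frequent equation2}: pairing $\varphi_{243}^{-1}\varphi_{215}\varphi_{534}$ with $j_2(l)$ yields a sum over the iterated deconcatenation coproduct $\delta^{(2)}(j_2(l))=\sum l_{(1)}\otimes l_{(2)}\otimes l_{(3)}$ of $j_2(l)$ in $\B$,
$$
<j_2(l)|\varphi_{243}^{-1}\varphi_{215}\varphi_{534}>
=\sum <l_{(1)}|\varphi_{243}^{-1}>\,<l_{(2)}|\varphi_{215}>\,<l_{(3)}|\varphi_{534}>.
$$
The crux is that, since $j_2$ is only a shuffle-algebra section of $r_2$ and not a coalgebra map, this coproduct is \emph{not} $(j_2\otimes j_2\otimes j_2)\delta^{(2)}_{\A_z}(l)$; its shape must be read off from the decompositions $\dec_2$ and $\dec_3$. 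Using \eqref{eqn:dec2}, \eqref{eqn:dec3} I would check that the head $l_{(3)}$ and tail $l_{(1)}$ produced by $\delta^{(2)}$ land in the subspaces singled out by $\reg_z$ and $\reg_{z,1}$, with a convergent core in $\A_z^{-2}$ in between, so that the three-fold splitting reproduces the combinatorial shape of $\lambda=(|_{z\to 1})\circ(\mathrm{const}\otimes\id)\circ\reg_z\circ N$, where $N$ is built from $\reg_{z,1}$, the twist $\id\otimes\tau_z$, and a shuffle multiplication.

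Next I would identify the three factors. For the leftmost one I use $\varphi_{243}=\tau(\varphi_{215})$, which holds because $\tau=(1,4)(3,5)$ acts by $t_{ij}\mapsto t_{\tau(i)\tau(j)}$ and hence sends $\varphi(t_{21},t_{15})$ to $\varphi(t_{24},t_{43})$; by \eqref{pairing equation} this turns $<l_{(1)}|\varphi_{243}^{-1}>$ into $<\tau(l_{(1)})|\varphi_{215}^{-1}>$, and Lemma \ref{lem:new involution} converts $\tau$ on $\B$ into $S\circ\tau_z$ on $\A_z$ after $r_2$. The antipode $S$ arising here is absorbed by the inverse in $\varphi_{215}^{-1}$ (both are inversion on group-like elements), leaving exactly the $\tau_z$ of $N$. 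The remaining factors $\varphi_{215}$ and $\varphi_{534}$, which sit on the boundary divisors $L_{34}$ (the locus $z=1$) and $L_{35}$ (the locus $z=\infty$ at the tangential base point) in Figure \ref{fig:M05}, are matched --- through $\dec_2$, $\dec_3$ and the linear relations in $\mathfrak{P}_5$ --- with the specialization $|_{z\to 1}$ (which merges $e_z$ and $e_1$) and with the head-regularization $\reg_z$ followed by $\mathrm{const}$, respectively. Reassembling the three identifications and collapsing the coproduct sum, using group-likeness to recombine the shuffle-regularized pieces, then produces $<\lambda(l)|\varphi>$.

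The step I expect to be the main obstacle is exactly the bookkeeping in the middle: computing $\delta^{(2)}(j_2(l))$ through $\dec_2,\dec_3$ and then showing that the two constant-term projections annihilate precisely the head and tail pieces that do not match, so that the $\tau_z$-twisted convergent core survives in the right order and with the correct inverse on the $\tau$-twisted factor. I would organize this as a degree-by-degree induction on $\B$, parallel to the inductive argument of Lemma \ref{basis presentation}; the group-like condition on $\varphi$ is what makes the induction close, since it is what lets every shuffle splitting pass through the pairing multiplicatively.
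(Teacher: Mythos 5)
Your toolbox is the right one, and two of your three identifications agree with what actually happens: $\varphi_{243}=\tau(\varphi_{215})$ combined with \eqref{pairing equation} and Lemma \ref{lem:new involution}, with the antipode $S$ absorbed into the inverse of a group-like element, is exactly how the $\tau_z$ inside $N$ is produced; and $\varphi_{215}$, rewritten as $\varphi(t_{21},t_{23}+t_{24})$ and pulled through \eqref{eqn:on r2}, is indeed dual to merging $e_z$ with $e_1$, i.e.\ to $|_{z\to 1}$. The genuine gap is your third identification: $\varphi_{534}$ does \emph{not} implement $\reg_z$ followed by $\mathrm{const}$. It is completely inert. By \eqref{eqn:rho=Li}, every term of $j_2(l)$ for $l\in\A_z^0$ ends in $e_{24}$ or in $e_{23}-e_{31}$, while $\varphi_{534}=\varphi(t_{53},t_{34})=\varphi(-t_{31}-t_{23}-t_{34},t_{34})$; since $<e_{24}\mid t_{34}>=<e_{24}\mid -t_{31}-t_{23}-t_{34}>=0$ and $<e_{23}-e_{31}\mid -t_{31}-t_{23}-t_{34}>=-1+1=0$, every nonempty right deconcatenation segment of $j_2(l)$ pairs to zero with $\varphi_{534}$, so only its constant term survives. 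The two constant-term projections in $\lambda$ are instead passed through the pairing by the \emph{commutator} half of the hypothesis: writing $\reg_z(N(l))=\sum_n e_z^{\sha n}\sha u_n$, group-likeness produces a factor $<e_z\mid\varphi(f_0,f_z)>^n$, which vanishes for $n\geqslant 1$ precisely because $\log\varphi\in[\hat{\mathfrak f}_2,\hat{\mathfrak f}_2]$ has no linear term. So your assertion that the character property is the only place where $\varphi\in\exp[\hat{\mathfrak f}_2,\hat{\mathfrak f}_2]$ enters is false; the commutator condition is also what legitimizes substitutions such as $\varphi(t_{21},t_{15})=\varphi(t_{21},t_{23}+t_{24})$, since central shifts of the arguments are invisible to Lie words of degree $\geqslant 2$. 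If you try to force $\varphi_{534}$ to reproduce the head regularization, there is simply no mechanism for it, and the bookkeeping you flag as the main obstacle cannot close.

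That obstacle is, moreover, avoidable, and the paper avoids it: since $\lambda$ and $j_2$ are shuffle algebra homomorphisms and both $\varphi$ and $\varphi_{243}^{-1}\varphi_{215}\varphi_{534}$ are group-like, both sides of the asserted identity are shuffle characters in $l$ by \eqref{frequent equation1}; hence, by the $\reg_{z,1}$ isomorphism $\A_z^0\simeq\A_z^{-2}\otimes(\A_z^0\cap\Q\langle e_1,e_z\rangle)$, it suffices to verify the formula separately for $l\in\A_z^{-2}$ and for $l\in\A_z^0\cap\Q\langle e_1,e_z\rangle$. In the first case $\lambda(l)=l_{z\to 1}$, and both $\varphi_{243}^{-1}$ and $\varphi_{534}$ drop out by the end-letter structure of $j_2(l)$, leaving only the $\varphi_{215}$-step $<j_2(l)\mid\varphi_{215}>=<l\mid\varphi(f_0,f_1+f_z)>=<l_{z\to 1}\mid\varphi>$; in the second case $N(l)=\tau_z(l)$, only $\varphi_{243}^{-1}$ survives, and one runs exactly the $\tau$--$S$--$\tau_z$ chain you sketched. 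In particular the three factors never act simultaneously on the three legs of $\delta^{(2)}(j_2(l))$: in each multiplicative generator case two of them contribute only their constant terms, which is why no inductive head/middle/tail matching of the kind you propose is needed, and why your plan, as stated, stalls exactly at the step you identified.
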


\begin{proof}
Since $\lambda:\A^0_z\to\A^0$ is a shuffle algebra homomorphism,
$\A_z^0$  is isomorphic to $\A_z^{-2}\otimes (\A_z^0\cap \Q\langle e_1,e_z\rangle)$
as shuffle algebras
and $\varphi$ is group-like,
it is enough to check the equality for the case  (i)  $l\in\A_z^{-2}$ or 
(ii) $l\in\A_z^0\cap \Q\langle e_1,e_z\rangle$
by \eqref{frequent equation1}.

(i). It is enough to check for $l\in \A_z^{-2}$ determined by \eqref{eqn:rho=Li}.
By definition, we have $\lambda(l)=l|_{z\to 1}$. Hence
\begin{align*}
<j_2(l)|\varphi^{-1}_{243}\varphi_{215}\varphi_{534}>
&
=<j_2(l)|\varphi_{215}\varphi_{534}>
=<j_2(l)|\varphi_{215}> \\
&=<l|\varphi(f_0,f_1+f_z)>  
=<l_{z\to 1}\mid\varphi>=<\lambda(l)|\varphi>.
\end{align*}
Here 
\begin{itemize}
\item 
The first equality is by \eqref{eqn:rho=Li}, \eqref{frequent equation2} and
$\varphi(f_0,0)=\varphi(0,f_1)=1$
since $j_2(l)$ is calculated by the prescription in Remark \ref{rem:rho and MPL}.(iii);
Recursive differentiations of \eqref{eqn:rho=Li} with respect to $w$ and $z$
tell us
that there always appears $e_{21}$ or $e_{31}$ prior to $e_{24}$
in each term of $j_2(l)$.
Hence $\varphi_{243}$ never contribute to the pairing.
\item
The second one is again by  \eqref{eqn:rho=Li} and \eqref{frequent equation2};
$j_2(l)$ is of the form $[l'|e_{24}]+[l''|e_{23}-e_{31}]$ ($l', l''\in \B$).
So $\varphi_{534}$ never contribute to the pairing 
because
$\varphi(t_{53},t_{34})=\varphi(-t_{31}-t_{32}-t_{34},t_{34})$.
\item
The third one follows from 
$r_2\circ j_2=\id$, \eqref{eqn:on r2} and
$\varphi(t_{21},t_{15})=\varphi(t_{21},t_{23}+t_{24}+t_{34})
=\varphi(t_{21},t_{23}+t_{24})$ where we use 
$[t_{34},t_{21}]=[t_{34},t_{23}+t_{24}]=0$ in the second equality.
\item
The fourth and the last ones are immediate.
\end{itemize}

(ii). It is enough to check for $l\in\A_z^0\cap \Q\langle e_1,e_z\rangle$ determined by \eqref{eqn:rho=Li}
with $({\bf k}_1,\dots,{\bf k}_N)=({\bf 1}^{m_1},\dots,{\bf 1}^{m_N})$.
By definition, we have
\begin{align*}
<\lambda(l)|\varphi>&=<(\Const\otimes\id)\circ \reg_{z}\circ N(l)|\varphi(f_0,f_1+f_z)>
=<N(l)|\varphi(f_0,f_1+f_z)> \\
&
=<\tau_z(l)|\varphi(f_0,f_1+f_z)> 
=<S\circ\tau_z(l)|\varphi(f_0,f_1+f_z)^{-1}> \\
&=<r_2\circ \tau\circ j_2(l)|\varphi(f_0,f_1+f_z)^{-1}> 
=<\tau\circ j_2(l)|\varphi_{215}^{-1}> 
=<j_2(l)|\varphi_{243}^{-1}> \\
&=<j_2(l)|\varphi^{-1}_{243}\varphi_{215}>
=<j_2(l)|\varphi^{-1}_{243}\varphi_{215}\varphi_{534}>.
\end{align*}
Here
\begin{itemize}
\item 
The first equality is by definition.
\item
The second one follows from the fact that $\varphi$ is commutator group-like and
$N(l)\in\A_z^{-2}$.
\item
The third one  is due to $\tau_z(l)\in \Q\langle e_0,e_z\rangle$.
\item
The fourth one is since we have $S_U(\varphi)=\varphi^{-1}$ for a group-like series $\varphi$ where $S_U$ is the antipode of $U{\mathfrak f}_2$.
\item
The fifth one is by $r_2\circ j_2=\id$ and  Lemma \ref{lem:new involution}.
\item
The sixth one is due to $\varphi(t_{21},t_{15})=\varphi(t_{21},t_{23}+t_{24})$
and \eqref{eqn:on r2}.
\item
The seventh one is by \eqref{pairing equation}.
\item
The eighth one is by \eqref{frequent equation2} and
$\varphi(t_{21},t_{15})=\varphi(t_{21},t_{23}+t_{24})$;
Since $e_{21}$ never appear in $j_2(l)$, $\varphi_{215}$ never contribute to the pairing.
\item
The last one is due to
the same reason to the second one of  our previous case (i). 
\end{itemize}
Therefore we get the claim.
\end{proof}

%

The following is also required to the proof of Theorem \ref{thm:main 1}.

\begin{lem}
For any $\ell\in\tilde{\mathcal I}_{\mathrm{ST}}$,
we have
\begin{equation}\label{eq: 351 124}
<\ell|\varphi_{351}\varphi_{124}>=0.
\end{equation}
\end{lem}

\begin{proof}
For any $\l\in\A^1_w$
we have
\begin{align}\label{eq:j3l 35124}
<j_3(l)|\varphi_{351}\varphi_{124}>&=<j_3(l)|\varphi_{351}> \\
&=<l|\varphi(3f_0-f_w-f_1,-2f_0+f_w+f_1)>. \notag
\end{align}
Here
\begin{itemize}
\item
The first equality is
by the same reason to the second equality in the case (i)
of the proof of Proposition \ref{prop: key formula};
Since $j_3(l)$ is of the form $[l'|e_{31}-e_{34}]+[l''|e_{23}-e_{31}]$ ($l', l''\in \B$),
$\varphi_{124}$ never contribute to the pairing.
\item
The second one follows from 
$r_3\circ j_3=\id$, \eqref{eqn:on r3} and
$\varphi_{351}=\varphi(t_{35},t_{51})
=\varphi(-t_{31}-t_{23}-t_{34},t_{23}+t_{34})$
where we use 
$[t_{24},t_{35}]=[t_{24},t_{23}+t_{34}]=0$ in the last equality.
\end{itemize}
It is immediate to see
$$
<l|\varphi(3f_0-f_w-f_1,-2f_0+f_w+f_1)>=
<l_{1\to w}|\varphi(3f_0-f_w-f_1,-2f_0+f_w+f_1)>
$$
for any $l\in \A_w$. Therefore
we have
\begin{equation}\label{eqn:j3 315}
<j_3(l)|\varphi_{351}>=<j_3(l_{1\to w})|\varphi_{351}>,
\end{equation}
whence
$
<j_3(l)|\varphi_{351}\varphi_{124}>=<j_3(l_{1\to w})|\varphi_{351}\varphi_{124}>
$
for any $l\in \A^1_w$.
Thus we have
\begin{align*}
<j_3(l-l_{1\to w}) & \sha i_3(m)|\varphi_{351}\varphi_{124}> \\
&=<j_3(l-l_{1\to w})|\varphi_{351}\varphi_{124}>\cdot
<i_3(m)|\varphi_{351}\varphi_{124}> 
=0
\end{align*}
for any $l\in\A^1_w$ and $m\in\A^1$,
which proves our claim by  Proposition \ref{basis presentation}.
\end{proof}

The proof of Theorem \ref{thm:main 1} goes as follows:
Assume that  
$\varphi$ is a commutator group-like series satisfying the pentagon equation.
By \cite{F10}, we have the 2-cycle relation
$\varphi(f_0,f_1)\varphi(f_1,f_0)=1$.
Then we have
$\varphi_{215}\varphi_{534}=\varphi_{243}\varphi_{351}\varphi_{124}$.
By Proposition \ref{prop: key formula}, we have for any $l\in\A_z^0$
$$
<\lambda(l)|\varphi>=<j_2(l)|\varphi^{-1}_{243}\varphi_{215}\varphi_{534}>
=<j_2(l)|\varphi_{351}\varphi_{124}>.
$$
By \eqref{ISTA0} and \eqref{eq: 351 124},
we have $<j_2(l)|\varphi_{351}\varphi_{124}>=0$ for any $l\in{\mathcal I}_{\mathrm{ST}}\subset \A_z^0$,
which proves $<\lambda(l)|\varphi>=0$ for 
any $l\in{\mathcal I}_{\mathrm{ST}}$.
\qed

\section{The confluence relations imply the  pentagon equation}
We prove that the confluence relations imply the pentagon equation under the commutator group-like condition
(Theorem \ref{thm:main 2}).

\begin{thm}\label{thm:main 2}
Let 
$\varphi\in \exp [\hat{\mathfrak f}_2,\hat{\mathfrak f}_2].$
If it satisfies the confluence relation, i.e.
$<l \mid\varphi>=0$ for any $l\in{\mathcal I}_{\CF}$,
then it satisfies the pentagon equation.
\end{thm}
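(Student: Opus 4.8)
The plan is to run the argument of Theorem \ref{thm:main 1} in reverse. Observe that the pentagon equation entered that proof at exactly one point, through the rewriting $\varphi_{215}\varphi_{534}=\varphi_{243}\varphi_{351}\varphi_{124}$, which combines the pentagon with the $2$-cycle relation; everything else is unconditional for a commutator group-like $\varphi$, namely the key formula $\langle\lambda(l)\mid\varphi\rangle=\langle j_2(l)\mid\varphi_{243}^{-1}\varphi_{215}\varphi_{534}\rangle$ of Proposition \ref{prop: key formula} and the vanishing $\langle j_2(l)\mid\varphi_{351}\varphi_{124}\rangle=0$ for $l\in\I_\ST$ obtained via Lemma \ref{basis presentation}. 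So the substance of the converse is to recover, from the confluence hypothesis alone, the single group-like identity $\Phi=\Psi$ in $\widehat{U\frak P_5}$, where I write $\Phi:=\varphi_{243}^{-1}\varphi_{215}\varphi_{534}$ and $\Psi:=\varphi_{351}\varphi_{124}$.

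First I would dispose of the $2$-cycle relation, which is needed before the pentagon can be put in the form $\Phi=\Psi$. By Theorem \ref{thm:HS confluence-->double shuffle}(ii) the confluence relations contain the duality relations, $\I_\CF\supset\I_\Delta$, so our hypothesis yields $\langle w-\tau_\infty(w)\mid\varphi\rangle=0$ for all $w\in\A^0$. Since $\tau_\infty$ is, up to the antipode $S$, precisely the swap $e_0\leftrightarrow e_1$ dual to $f_0\leftrightarrow f_1$, and since $S\varphi=\varphi^{-1}$ for the group-like $\varphi$, this duality is equivalent (after the usual regularization, using group-likeness) to $\varphi(f_0,f_1)\varphi(f_1,f_0)=1$. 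With the $2$-cycle relation in hand the pentagon equation becomes equivalent to $\Phi=\Psi$, and Proposition \ref{prop: key formula} together with the confluence hypothesis gives $\langle j_2(l)\mid\Phi\rangle=\langle\lambda(l)\mid\varphi\rangle=0$ for $l\in\I_\ST$, while the forward computation gives $\langle j_2(l)\mid\Psi\rangle=0$ for $l\in\I_\ST$ unconditionally; hence $\Phi$ and $\Psi$ agree against all of $j_2(\I_\ST)$.

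The heart of the proof is then to upgrade this partial agreement to the full equality $\Phi=\Psi$. As both are group-like it suffices to compare pairings against a shuffle-generating family of $\B$, and here I would exploit the two Oi--Ueno decompositions $\B=j_2(\A_z)\sha i_2(\A)=j_3(\A_z)\sha i_3(\A)$ of \eqref{eqn:dec2}--\eqref{eqn:dec3}. The element $\Phi$ is adapted to $\dec_2$, so its pairings against $j_2(\A_z)$ are governed by the key formula, whereas $\Psi$ is adapted to $\dec_3$, so its pairings against $j_3(\A_z)$ and $i_3(\A)$ are the ones computed in the proof of Theorem \ref{thm:main 1}. The involution $\tau=(1,4)(3,5)$ and Lemma \ref{lem:new involution}, which intertwines $r_2$ with $S\circ\tau_z$, furnish the bridge exchanging the roles of $\pr_2$ and $\pr_3$; on the ``degenerate'' tensor directions $i_2(\A)$ and $i_3(\A)$ the identity $\Phi=\Psi$ should reduce to the already-secured $2$-cycle and hexagon-type consistencies, so that only the $j_2$- and $j_3$-directions remain.

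I expect the main obstacle to lie precisely in these remaining directions, because confluence controls only $j_2(\I_\ST)$, which is a \emph{proper} subspace of $j_2(\A^0_z)$, while a naive generator argument would demand agreement on all of $j_2(\A_z)$. The crux is therefore to show that, for the specific group-like element $\Phi\Psi^{-1}$, the pairing $\langle j_2(u)\mid\Phi\Psi^{-1}\rangle$ depends only on the class of $u$ modulo $\I_\ST$ --- equivalently, that off $\I_\ST$ the two sides already agree unconditionally --- so that the confluence vanishing on $\I_\ST$ propagates to vanishing against a determining family. Establishing this should require a careful analysis of $\A^0_z/\I_\ST$ through the operators $\tilde\partial_{z,\alpha}$ and the explicit presentation of $\tilde{\mathcal I}_\ST$ in Lemma \ref{basis presentation}, combined with the $\frak S_5$-symmetry that exchanges the decompositions $\dec_2$ and $\dec_3$.
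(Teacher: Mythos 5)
Your opening moves coincide with the paper's: the $2$-cycle relation indeed follows from $\I_\CF\supset\I_\Delta$ plus group-likeness (this is exactly Lemma \ref{lem:2cycle}), Proposition \ref{prop: key formula} turns the confluence hypothesis into $\langle j_2(l)\mid \varphi_{243}^{-1}\varphi_{215}\varphi_{534}\rangle=0$ for $l\in\I_\ST$, and you are right that the vanishing of $\langle \tilde l\mid \varphi_{351}\varphi_{124}\rangle$ on $\tilde\I_\ST$ is unconditional. But your proof stops exactly where the theorem begins: the last two paragraphs announce the crux --- upgrading vanishing on $\I_\ST$ to triviality of a group-like element --- as something that ``should require a careful analysis,'' without supplying the argument. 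That is a genuine gap, and your proposed route to close it is not the right one: asking that ``off $\I_\ST$ the two sides already agree unconditionally'' is not even a well-posed reduction (the complement of $\I_\ST$ is not a subspace), and since $\A_z^0/\I_\ST$ is large --- conjecturally the whole MZV algebra --- no generator-by-generator comparison of $\Phi$ and $\Psi$ against $j_2(\A_z)$ can succeed without a new input. What is needed is a \emph{section} of $\A_z^0\twoheadrightarrow\A_z^0/\I_\ST$ landing in directions where the pairing visibly dies, and this is precisely what the paper constructs: for $l\in\A_z^0$, decompose $j_2(l)=\sum_i j_3(l^{(i)})\sha i_3(m^{(i)})$ via $\dec_3$ and set $\tilde l=j_2^{-1}\bigl(\sum_i j_3(l^{(i)}_{1\to z})\sha i_3(m^{(i)})\bigr)$; then $l-\tilde l\in\I_\ST$ by Lemma \ref{basis presentation} (Lemma \ref{lem:construction}).

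The second missing device is the choice of the group-like element itself. Rather than your $\Phi\Psi^{-1}$, the paper inserts two further factors --- harmless against $j_2(\I_\ST)$ by Lemma \ref{basis presentation}, \eqref{frequent equation1}, \eqref{frequent equation2} and \eqref{eqn:j3 315} --- to form the cyclic pentagon element $P_{15342}=\varphi_{153}\varphi_{342}\varphi_{215}\varphi_{534}\varphi_{421}$, whose arrangement is engineered so that the $2$-cycle relation gives $\pr_2(P_{15342})=\pr_3(P_{15342})=\pr_4(P_{15342})=1$. Then for every homogeneous $l\in\A_z^0$ of positive degree,
$$
\langle j_2(l)\mid P_{15342}\rangle=\langle j_2(\tilde l)\mid P_{15342}\rangle
=\sum\nolimits_i\langle j_3(l^{(i)}_{1\to z})\mid P_{15342}\rangle\cdot\langle i_3(m^{(i)})\mid P_{15342}\rangle=0,
$$
since $j_3(l^{(i)}_{1\to z})$ and $i_3(m^{(i)})$ correspond to $Li_{\bf k}(wz^{-1},1,\dots,1)$'s and $Li_{\bf h}(w,1,\dots,1)$'s, which are killed by $\pr_4(P_{15342})=1$ and $\pr_3(P_{15342})=1$ respectively. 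Because $\pr_2(P_{15342})=1$ forces $P_{15342}\in\exp\hat{\frak f}_3(t_{21},t_{23},t_{24})$, and $P_{15342}$ has no linear term, vanishing against $j_2(\A_z^0)$ in positive degree forces $P_{15342}=1$; acting by $(25)\in\frak S_5$ then yields the pentagon. So while you correctly localized the difficulty, the two ideas that actually resolve it --- the explicit representative $\tilde l$ adapted to $\dec_3$, and the specific pentagon-shaped element whose three projections degenerate --- are absent from your sketch and are not recoverable from the ingredients you list.
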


\subsection{Construction of a standard relation}
We show how to construct a standard relation with a given element $l\in\A^0_z$.
\begin{lem}
For each $l\in\A^0_z$,
decompose $j_2(\l)=\sum_i j_3(l^{(i)})\sha i_3(m^{(i)})$
with $l^{(i)}\in \A^1_w$ 
and $m^{(i)}\in \A^1$ 
by $\dec_3$.
Then
$\sum_i j_3(l^{(i)}_{1\to w})\sha i_3(m^{(i)})$ belongs to
$j_2(\A_z^0)$.
\end{lem}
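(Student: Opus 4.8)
The plan is to establish the membership $X:=\sum_i j_3\!\left(l^{(i)}_{1\to z}\right)\sha i_3\!\left(m^{(i)}\right)\in j_2(\A_z^0)$ in two stages: first that $X$ comes from the fibre, i.e. $X\in j_2(\A_z)$, and then that the corresponding element of $\A_z$ actually lies in the convergent subspace $\A_z^0$. Since $\dec_3$ is an isomorphism of shuffle algebras with $j_3$ and $i_3$ the inclusions of its two tensor factors, decomposing $j_2(l)$ amounts to writing $\dec_3(j_2(l))=\sum_i l^{(i)}\otimes m^{(i)}$, and then $\dec_3(X)=(\phi\otimes\id)\dec_3(j_2(l))$, where $\phi:\A_z\to\A_z$ is the substitution $e_1\mapsto e_z$ (fixing $e_0,e_z$), so that $l_{1\to z}=\phi(l)$. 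As $\phi$ preserves the ``ends in $e_1$ or $e_z$'' condition defining $\A_z^1$, we get $\dec_3(X)\in\A_z^1\otimes\A^1=\dec_3(\B^1)$, hence $X\in\B^1$ at the outset.

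For the fibre stage I would treat the two kinds of factors separately. First, $i_3$ and $j_2$ agree on $\A=\Q\langle e_0,e_1\rangle$ (both send $e_0\mapsto\xi_1$, $e_1\mapsto-\xi_{11}$), so $i_3(m)=j_2(m)$ and in particular $i_3(\A)\subset j_2(\A_z)$. Second, for $l\in\A_z^1$ I claim $j_3(l_{1\to z})\in j_2(\A_z)$: the substitution $e_1\mapsto e_z$ merges the singularities $z=1$ and $z=w$, so by \eqref{eqn:rho=Li} the image $\rho_5\!\left(j_3(l_{1\to z})\right)$ is of the form $Li(wz^{-1},{\bf 1},\dots)$, a function of $w/z$ lying in $I^{w}_{0}(\X(z))$; by diagram \eqref{dia:two rhos} this forces $j_3(l_{1\to z})=j_2(\hat u)$ with $\hat u:=r_2 j_3(l_{1\to z})$. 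Reading off its poles (only $w=0$ and $w=z$), $\hat u\in\Q\langle e_0,e_z\rangle$, and since $j_3(l_{1\to z})\in\B^1$ it ends in $e_z$; thus in fact $\hat u\in\A_z^0$. As $j_2(\A_z)$ is a shuffle subalgebra, it follows that $X=j_2(v)$ with $v=\sum_i\hat u^{(i)}\sha m^{(i)}\in\A_z^1$.

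What remains, and what I expect to be the main obstacle, is the upgrade $v\in\A_z^0$; note this is strictly stronger than $v\in\A_z^1$, since $\tilde{\mathcal I}_{\mathrm{ST}}\cap j_2(\A_z^1)$ genuinely contains elements outside $j_2(\A_z^0)$ (for instance $j_2(e_1e_z)$, all of whose iterated $\widetilde\Const\tilde\partial_{z,\alpha}$ vanish), so it cannot be deduced formally and must use $l\in\A_z^0$. I would introduce the shuffle-derivation $D_1$ on $\A_z$ that removes a leading $e_1$ (so $D_1(e_1x)=x$ and $D_1(e_0x)=D_1(e_zx)=0$), for which, inside $\A_z^1$, one has $v\in\A_z^0$ precisely when $D_1(v)=0$. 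Since each $\hat u^{(i)}\in\Q\langle e_0,e_z\rangle$ has $D_1(\hat u^{(i)})=0$, the Leibniz rule gives $D_1(v)=\sum_i\hat u^{(i)}\sha D_1(m^{(i)})$, while the hypothesis $l\in\A_z^0$ yields the companion identity $0=D_1(l)=\sum_i D_1(p^{(i)})\sha m^{(i)}+\sum_i p^{(i)}\sha D_1(m^{(i)})$ with $p^{(i)}:=r_2 j_3(l^{(i)})$.

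The hard part will be to pass from this second identity to $\sum_i\hat u^{(i)}\sha D_1(m^{(i)})=0$, i.e. to control how the two families $p^{(i)}=r_2 j_3(l^{(i)})$ and $\hat u^{(i)}=r_2 j_3(l^{(i)}_{1\to z})$ interact under $D_1$; I expect to close this by induction on degree, using that $l^{(i)}$ and $l^{(i)}_{1\to z}$ differ only by the substitution $e_1\mapsto e_z$. The conceptual content is a confluence statement: replacing $l^{(i)}$ by $l^{(i)}_{1\to z}$ does not change the $w\to1$ singular behaviour of $\sum_i\rho_5(j_3(\,\cdot\,))\cdot\rho_5(i_3(m^{(i)}))$ — the modifying terms lie in $\tilde{\mathcal I}_{\mathrm{ST}}$ and vanish in the confluent limit merging $z=w$ with $z=1$ — so the regularity of $\rho_z(l)$ at $w=1$ (which holds because $l\in\A_z^0$) transfers to $\rho_z(v)$, giving $v\in\A_z^0$ and hence $X\in j_2(\A_z^0)$.
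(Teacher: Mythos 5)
Your first two stages are correct, and they essentially reproduce the paper's own argument: the paper likewise reads off from $\rho_5$ and \eqref{eqn:rho=Li} that each $j_3(l^{(i)}_{1\to z})$ corresponds to a linear combination of $Li_{\bf k}(wz^{-1},1,\dots,1)$ and hence lies in $j_2(\A_z^0)$, while each $i_3(m^{(i)})$ corresponds to $Li_{\bf h}(w,1,\dots,1)$ and lies in $j_2(\A_z^1)$; your refinements $i_3|_{\A}=j_2|_{\A}$ and $\hat u^{(i)}:=r_2 j_3(l^{(i)}_{1\to z})\in\Q\langle e_0,e_z\rangle\cap\A_z^0$ are sound, as is your diagnosis that the residual upgrade of $v=\sum_i \hat u^{(i)}\sha m^{(i)}$ from $\A_z^1$ to $\A_z^0$ is the actual content and must use $l\in\A_z^0$. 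Indeed the upgrade is genuinely needed: for $l=e_ze_1\in\A_z^0$ a direct computation gives
$\dec_3(j_2(e_ze_1))=(e_z-e_1)\otimes e_1+\bigl((e_1-e_0)e_z\bigr)\otimes 1$,
so a non-admissible $m^{(i)}=e_1\notin\A^0$ really does occur; what rescues the statement is that its coefficient is annihilated by the substitution, $(e_z-e_1)_{1\to z}=0$.

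Precisely at this point, however, your proposal stops being a proof. You reduce the claim to $\sum_i\hat u^{(i)}\sha D_1(m^{(i)})=0$ and then write that ``the hard part will be'' to deduce this and that you ``expect to close this by induction'' --- a declared gap, not an argument. Moreover the analytic closure you sketch is unsound: regularity of the function at $w=1$ does not detect $\A_z^0$ inside $\A_z^1$, because for any nonzero $s\in\I_\ST$ the element $s\sha e_1\in\A_z^1\setminus\A_z^0$ has $\rho_z(s\sha e_1)=\rho_z(s)\log(1-w)$, which extends continuously to $w=1$ since $\rho_z(s)\to L(s)=0$ there; so ``regularity of $\rho_z(l)$ transfers to $\rho_z(v)$, giving $v\in\A_z^0$'' is a non sequitur. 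Your algebraic identities are likewise provably insufficient: in the example above one has $p^{(1)}=r_2 j_3(e_z-e_1)=e_z\neq 0$, yet $\hat u^{(1)}=0$ solely because $e_z-e_1$ lies in the kernel of $1\to z$ --- information about the pair $(l^{(i)},l^{(i)}_{1\to z})$ that the relation $0=\sum_i D_1(p^{(i)})\sha m^{(i)}+\sum_i p^{(i)}\sha D_1(m^{(i)})$ (here $\sum_i p^{(i)}\sha D_1(m^{(i)})=e_z\neq0$) simply does not see. The paper closes exactly this step by invoking $l\in\A_z^0$ together with the definition of $\dec_3$: the deconcatenation structure of $\dec_3\circ j_2$ forces the coefficients $l^{(i)}$ attached to the divergent $m^{(i)}$ to die under $1\to z$, as in the example. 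Some word-level argument of this kind is indispensable, and it is exactly what is missing from your proposal.
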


\begin{proof}
We may assume  that $l$ corresponds to
$\ell_{{\bf k}_1,{\bf k}_2,\dots,{\bf k}_N}\in I^{w}_{0}(\X(z))$ under the isomorphism $\rho_z$.
By the description of
the isomorphism $\rho_5$ in Remark \ref{rem:rho and MPL}, 
$j_3(l^{(i)}_{1\to w})$ corresponds to a linear combination of
$\Li_{\bf k}(wz^{-1},1,\dots,1)$ with multi-indices $\bf k$
($\bf k$ can be empty).
Hence it lies on  $j_2(\A_z^0)$.

While, by Remark \ref{rem:MPL presentation of B and LST},
$i_3(m^{(i)})$ corresponds to a linear combination of
$\Li_{\bf h}(w,1,\dots,1)$ with multi-indices $\bf h$
($\bf h$ can be empty).
By $l\in\A^0_z$,  ${\bf k}_1$ is admissible 
(that means ${\bf e}_{{\bf k}_1}\in\A^0$) or empty.
\begin{itemize}
\item When ${\bf k}_1$ is admissible,  recursive differentiation of \eqref{eqn:rho=Li} shows that all $\bf h$ appearing are admissible.
Whence $i_3(m^{(i)})$ is always in $j_2(\A_z^0)$.
\item When ${\bf k}_1$ is empty, a non-admissible index $\bf h$ might occur
to contribute in $i_3(m^{(i)})$ for some $i$.
However such contribution is cancelled out
when we consider the summation $\sum_i j_3(l^{(i)}_{1\to w})\sha i_3(m^{(i)})$.
It is because
recursive differentiation of \eqref{eqn:rho=Li} implies
$j_3(l^{(i)})\in\A_z\cdot (e_1-e_w)$ for such $i$.
So a nontrivial contribution in the summation 
occurs  only in the case of 
$i_3(m^{(i)})\in j_2(\A_z^0)$.
\end{itemize}
Therefore we have $\sum_i j_3(l^{(i)})\sha i_3(m^{(i)})\in j_2(\A_z^0)$.
\end{proof}

Then we put
$$
\tilde l:=j_2^{-1}\left(\sum\nolimits_i j_3(l^{(i)}_{1\to w})\sha i_3(m^{(i)})\right)\in\A_z^0.
$$

\begin{lem}\label{lem:construction}
For $l\in\A^0_z$, we  have $l-\tilde l\in\I_\ST$.
\end{lem}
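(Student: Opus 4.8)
The plan is to apply the injection $j_2$ to the difference $l-\tilde l$ and to recognize the resulting element of $\B$ as lying \emph{simultaneously} in the upgraded standard-relation space $\tilde{\mathcal I}_{\mathrm{ST}}$ and in $j_2(\A_z^0)$; the identity \eqref{ISTA0} then pins it down to $j_2(\I_\ST)$, and injectivity of $j_2$ finishes the argument. In this way the statement is reduced entirely to the generator description of $\tilde{\mathcal I}_{\mathrm{ST}}$ from Lemma \ref{basis presentation}, together with \eqref{ISTA0}.

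Concretely, I would first compute $j_2(l-\tilde l)=j_2(l)-j_2(\tilde l)$. By the decomposition furnished by the preceding lemma we have $j_2(l)=\sum_i j_3(l^{(i)})\sha i_3(m^{(i)})$, while by the very definition of $\tilde l$ we have $j_2(\tilde l)=\sum_i j_3(l^{(i)}_{1\to z})\sha i_3(m^{(i)})$. Subtracting termwise gives
$$
j_2(l-\tilde l)=\sum\nolimits_i j_3\bigl(l^{(i)}-l^{(i)}_{1\to z}\bigr)\sha i_3(m^{(i)}),
$$
where $l^{(i)}\in\A_z^1$ and $m^{(i)}\in\A^1$. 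Each summand is exactly one of the generators appearing in Lemma \ref{basis presentation}, so $j_2(l-\tilde l)\in\tilde{\mathcal I}_{\mathrm{ST}}$. On the other hand, since $l$ and $\tilde l$ both lie in $\A_z^0$, so does $l-\tilde l$, whence $j_2(l-\tilde l)\in j_2(\A_z^0)$. Therefore $j_2(l-\tilde l)\in\tilde{\mathcal I}_{\mathrm{ST}}\cap j_2(\A_z^0)=j_2(\I_\ST)$ by \eqref{ISTA0}, and since $j_2$ is injective I conclude $l-\tilde l\in\I_\ST$.

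There is essentially no obstacle once the machinery is in place, as all the real content has been front-loaded into Lemma \ref{basis presentation} and \eqref{ISTA0}. The only point meriting a moment's care is to confirm that the decomposition produces indices with $l^{(i)}\in\A_z^1$ and $m^{(i)}\in\A^1$, so that the summands are genuine generators of $\tilde{\mathcal I}_{\mathrm{ST}}$; this holds because $l\in\A_z^0\subset\A_z^1$ forces $j_2(l)\in\B^1$, and $\dec_3(\B^1)$ is precisely $\A^1_w(\xi_2,\xi_{12}^{(2)},\xi_{22})\otimes_\Q\A^1(\xi_1,\xi_{11})$ by \eqref{eqn:dec3}.
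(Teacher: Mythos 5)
Your proof is correct and follows essentially the same route as the paper: the paper likewise reduces via \eqref{ISTA0} and the preceding lemma to showing $j_2(l-\tilde l)\in\tilde{\mathcal I}_{\mathrm{ST}}$, and verifies this through the identity $j_2(l-\tilde l)=\sum_i j_3(l^{(i)}-l^{(i)}_{1\to z})\sha i_3(m^{(i)})$ together with Lemma \ref{basis presentation}. Your additional check that the $\dec_3$-decomposition lands in $\A^1_z\otimes\A^1$ via \eqref{eqn:dec3} is a sound elaboration of a point the paper leaves implicit.
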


\begin{proof}
By \eqref{ISTA0} and the previous lemma,
it is enough to prove $j_2(l-\tilde l)\in \tilde\I_\ST$.
Since
$$
j_2(l-\tilde l)=\sum\nolimits_i j_3(l^{(i)}-l^{(i)}_{1\to w})\sha i_3(m^{(i)}),
$$
we have $l-\tilde l\in\I_\ST$ by Proposition \ref{basis presentation}.
\end{proof}

%
%

\subsection{Computations of the pairing}
The following lemma is auxiliary to prove Theorem \ref{thm:main 2}.
\begin{lem}\label{lem:2cycle}
Let 
$\varphi\in \exp [\hat{\mathfrak f}_2,\hat{\mathfrak f}_2]$
satisfies  the confluence relation.
Then it also satisfies the 2-cycle relation $\varphi(f_0,f_1)\varphi(f_1,f_0)=1$.
\end{lem}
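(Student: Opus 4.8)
The plan is to reduce the $2$-cycle relation to a symmetry of the functional $<\cdot\mid\varphi>$ and then to produce that symmetry from the duality relation, which is at our disposal because it is contained in the confluence relations.

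Throughout I regard $<\cdot\mid\varphi>\colon\A\to\Q$ as a linear functional. Since $\varphi$ is group-like, this functional is a homomorphism for $\sha$ (by the analogue of \eqref{frequent equation1} for $\A$ and $U\frak f_2$), and the commutator condition $\varphi\in\exp[\hat{\mathfrak f}_2,\hat{\mathfrak f}_2]$ forces $<e_0\mid\varphi>=<e_1\mid\varphi>=0$. Let $\sigma\colon U\frak f_2\to U\frak f_2$ be the swap $f_0\leftrightarrow f_1$, let $\sigma^*\colon\A\to\A$ be the shuffle automorphism exchanging $e_0\leftrightarrow e_1$ while keeping the order of letters, and let $S$ denote the antipode of $\A$ (signed word-reversal). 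Because $\varphi$ is group-like we have $\varphi(f_1,f_0)=\sigma(\varphi)$ and $\varphi^{-1}=S_{U}(\varphi)$ for the antipode $S_U$ of $U\frak f_2$; using the adjunctions $<W\mid\sigma(\psi)>=<\sigma^*(W)\mid\psi>$ and $<W\mid S_U(\psi)>=<S(W)\mid\psi>$, the target relation $\varphi(f_0,f_1)\varphi(f_1,f_0)=1$, i.e.\ $\sigma(\varphi)=S_U(\varphi)$, becomes
$$
<\sigma^*(W)\mid\varphi>=<S(W)\mid\varphi>\qquad\text{for every word }W .
$$

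First I would upgrade the duality relation to all of $\A$. By Theorem \ref{thm:HS confluence-->double shuffle}(ii) we have $\I_\Delta\subset\I_\CF$, so the hypothesis gives $<w-\tau_\infty(w)\mid\varphi>=0$ for all $w\in\A^0$. To pass from $\A^0$ to arbitrary words, write any word, via the shuffle regularization, as a $\Q$-linear combination of elements $w\sha e_0^{\sha a}\sha e_1^{\sha b}$ with $w\in\A^0$ and $a,b\geqslant 0$. Since $<\cdot\mid\varphi>$ is a shuffle homomorphism killing $e_0$ and $e_1$, only the terms with $a=b=0$ survive the pairing; and because $\tau_\infty$ is a shuffle automorphism with $\tau_\infty(e_0)=-e_1$, $\tau_\infty(e_1)=-e_0$ which preserves $\A^0$, the same holds after applying $\tau_\infty$. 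Combining this with duality on $\A^0$ yields the global invariance $<\tau_\infty(W)\mid\varphi>=<W\mid\varphi>$ for every word $W$. The proof then closes with the elementary identity $\tau_\infty\circ\sigma^*=S$ on $\A$ (swapping letters and then reversing-with-sign-and-swapping is signed reversal): applying the global invariance to $\sigma^*(W)$ gives $<\sigma^*(W)\mid\varphi>=<\tau_\infty(\sigma^*(W))\mid\varphi>=<S(W)\mid\varphi>$, which is exactly the displayed form of the $2$-cycle relation.

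The only delicate point, and the step where the two hypotheses genuinely combine, is this upgrade of duality from the convergent space $\A^0$ to all words: it relies precisely on the vanishing $<e_0\mid\varphi>=<e_1\mid\varphi>=0$ coming from the commutator condition, without which $\tau_\infty$-invariance would fail on divergent words and the identity $\tau_\infty\circ\sigma^*=S$ could not be applied freely. Conceptually, the main thing to recognize is that the $2$-cycle relation is not to be extracted from the confluence relations directly, but factored through the duality relation together with the compatibility of the involutions $\tau_\infty$, $\sigma^*$ and the shuffle antipode $S$; one should in particular resist the tempting but circular route of trying to deduce the $2$-cycle relation from duality for $\varphi$ alone, since duality for $\varphi$ is equivalent to duality for $\varphi^{-1}$ only modulo the very relation being proved.
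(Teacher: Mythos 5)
Your proof is correct and takes essentially the same route as the paper's: both factor the $2$-cycle relation through the duality relation $\I_{\Delta}\subset\I_\CF$ (Theorem \ref{thm:HS confluence-->double shuffle}(ii)), which on the series side reads $S(\varphi(f_1,f_0))=\varphi(f_0,f_1)$, and then invoke group-likeness to get $S(\varphi(f_1,f_0))=\varphi(f_1,f_0)^{-1}$. The only difference is that you make explicit the shuffle-regularization step upgrading duality from $\A^0$ to all of $\A$ via $<e_0\mid\varphi>=<e_1\mid\varphi>=0$ (and the identity $\tau_\infty\circ\sigma^*=S$), a point the paper's three-line proof leaves implicit.
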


\begin{proof}
Let $\varphi$ be as above.
Since  it is shown in
\cite{HS} Theorem 28 that the confluence relations imply the duality relation
(cf. Theorem \ref{thm:HS confluence-->double shuffle}),
we have $S_U(\varphi(f_1,f_0))=\varphi(f_0,f_1)$
with the antipode $S_U$ of $U{\mathfrak f}_2$.
While since $\varphi$ is group-like, we have $S_U(\varphi(f_1,f_0))=\varphi(f_1,f_0)^{-1}$.
Thus we get the claim.
\end{proof}

The proof of Theorem \ref{thm:main 2} goes as follows:
Assume that $\varphi\in \exp [\hat{\mathfrak f}_2,\hat{\mathfrak f}_2]$ satisfies the confluence relation, which is also equivalent to saying that 
$<\lambda(l) \mid\varphi>=0$ for any $l\in{\mathcal I}_{\ST}$.
Then for any $l\in{\mathcal I}_{\ST}$, we have
\begin{align}\label{eq: j2(l)P}
0&=<j_2(l)|\varphi_{243}^{-1}\varphi_{215}\varphi_{534}>
=<j_2(l)|\varphi_{342}\varphi_{215}\varphi_{534}> \\
\notag
&
=<j_2(l)|\varphi_{153}\varphi_{342}\varphi_{215}\varphi_{534}>.
\end{align}
Here
\begin{itemize}
\item
The first equality follows from the key formula in Proposition \ref{prop: key formula}.
\item
The second one is due to Lemma \ref{lem:2cycle}.
\item
The last one follows from the second equality of \eqref{eqn:on i2}, 
\eqref{frequent equation1}, \eqref{frequent equation2},
and the same arguments to the proof of 
\eqref{eqn:j3 315}
because
$j_2(l)\in \tilde \I_\ST$ is of the form with 
$\sum_i j_3(l^{(i)}-l^{(i)}_{1\to w})\sha i_3(m^{(i)})$
by Proposition \ref{basis presentation}.
\end{itemize}

Set $P'_{15342}=\varphi_{153}\varphi_{342}\varphi_{215}\varphi_{534}
\in \exp \widehat{\mathfrak P}_5$.
Let 
$\l=\rho_z^{-1}(\ell_{{\bf k}_1,{\bf k}_2,\dots,{\bf k}_N})\in\A^0_z$
with $N\neq 1$.
Then we have
\begin{align}\label{eq:j2lP13542}
<j_2(l)|&P'_{15342}>
=<j_2(\tilde l)|P'_{15342}> \\ \notag
&=\sum\nolimits_i<j_3(l^{(i)}|_{1\to w})\ |P'_{15342}>\cdot
<i_3(m^{(i)})|P'_{15342}> \\
&=\sum\nolimits_i<i_4(n^{(i)})\ |P'_{15342}>\cdot
<i_3(m^{(i)})|P'_{15342}>  
=0 \notag
\end{align}
when the decomposition of $j_2(l)$ is given by 
$j_2(\l)=\sum_i j_3(l^{(i)})\sha i_3(m^{(i)})$.
Here
\begin{itemize}
\item The first equality is by \eqref{eq: j2(l)P} as we have $l-\tilde l\in \I_\ST$
by Lemma \ref{lem:construction}.
\item The second one is by \eqref{frequent equation1}.
\item In the third one, $n^{(i)}\in\A^1$ is chosen to be
$i_4(n^{(i)})=j_3(l^{(i)}|_{1\to w})$:
Such an element always exists because
$j_3(l^{(i)}|_{1\to w})$
corresponds to a linear combination of
$\Li_{\bf k}(wz^{-1},1,\dots,1)$'s,
which span $i_4(\A^1)$, under $\rho_5$.
\item The last one follows from \eqref{eqn:on i4}
because we have 
$\pr_4(P'_{15342})=1$
by Lemma \ref{lem:2cycle}
and $\deg n^{(i)} > 0$ by $N>1$.
\end{itemize}


While by Lemma \ref{lem:2cycle}, we have $\pr_2(P'_{15342})=1$.
Since the kernel of the restriction 
$\pr_2|_{\widehat{{\mathfrak P}_5}}:\widehat{{\mathfrak P}_5}\to \widehat{{\mathfrak f}_2}$
is $\widehat{\frak f}_3(t_{21},t_{23},t_{24})$
and $P'_{15342}$ is in $\exp \widehat{\mathfrak P}_5$, we see that  
 $P'_{15342}\in \exp \hat{\frak f}_3(t_{21},t_{23},t_{24})$.
Here $\hat{\frak f}_3(t_{21},t_{23},t_{24})$ is the Lie subalgebra of $\hat{\frak P}_5$
freely generated by the three elements.
Therefore $P'_{15342}$ is described as 
$$P'_{15342}=r_2(P')$$
for a $P'\in\exp\widehat{\mathfrak f_z}$.

By \eqref{eq:j2lP13542},
we  have $<j_2(l)|P'_{15342}>=0$ for 
$\l=\rho_z^{-1}(\ell_{{\bf k}_1,{\bf k}_2,\dots,{\bf k}_N})\in\A^0_z$
with $N\neq 1$.
By $r_2\circ j_2=\id$ and \eqref{eqn:on r2},
we have
$$<l|P'>=<j_2(l)|P'_{15342}>=0$$
for such $l\in\A_z$. 
By $\varphi\in \exp [\hat{\mathfrak f}_2,\hat{\mathfrak f}_2]$,
there is no linear terms in $P'_{15342}\in\widehat{U\frak P_5}$.
Thus we have 
$P'\in\exp \hat{\frak f}_2\subset \exp \hat{\frak f}_z$.
Considering the case of $N=1$, that is,
$\l=\rho_z^{-1}(\ell_{{\bf k}_1})$,
we have
$$
<l|P'>=<j_2(l)|P'_{15342}>=<i_3(l)|P'_{15342}>=<l|\pr_3(P'_{15342})>
=<l|\varphi>
$$
by $r_2\circ j_2=\id$, \eqref{eqn:on r2},
$j_2(l)=i_3(l)$ for such $l$
and the second equality of \eqref{eqn:on i2}.
Therefore we have $P'=\varphi$, which says
$P_{15342}'=\varphi_{124}$.
So we have 
$\varphi_{153}\varphi_{342}\varphi_{215}\varphi_{534}\varphi_{421}=1$
by Lemma \ref{lem:2cycle}.
By considering the action of $(25)\in\frak S_5$ on $\widehat{U\frak P_5}$, we get
$\varphi_{123}\varphi_{345}\varphi_{512}\varphi_{234}\varphi_{451}=1$,
whence we obtain the pentagon equation for $\varphi$.
\qed
%

\bigskip
By combining  Theorem \ref{thm:main 1} and Theorem \ref{thm:main 2},
we settle the proof of Theorem \ref{thm:main}.

\bigskip
{\it Acknowledgments.} The author has been supported by JSPS KAKENHI JP18H01110. 
He thanks 
B. Enriquez for giving him comments on the earlier version of this paper
and S. Yasuda for informing him of Suzuki's master thesis (\cite{S})
partially related to this paper.
He is also grateful to the referee who gave many valuable comments to
improve the paper.

\end{document}